\newtheorem{theorem}{Theorem}[section]
\newtheorem{proposition}[theorem]{Proposition}
\newtheorem{definition-proposition}[theorem]{Definition-Proposition}
\newtheorem{lemma}[theorem]{Lemma}
\newtheorem{corollary}[theorem]{Corollary}
\newtheorem{definition}[theorem]{Definition}
\newcounter{lettered}
\newtheorem{letteredtheorem}[lettered]{Theorem}
\theoremstyle{definition}
\newtheorem{remark}[theorem]{Remark}
\DeclareMathOperator{\holim}{{\rm holim}}
\DeclareMathOperator{\colim}{{\rm colim}}
\DeclareMathOperator{\Spf}{{\rm Spf}}
\DeclareMathOperator{\Def}{{\rm Def}}
\DeclareMathOperator{\act}{\cdot}
\newcommand{\wt}{{\rm wt}}
\newcommand{\ind}{{\rm ind}}
\DeclareMathOperator{\im}{{\rm im\:}}
\newcommand{\J}{\Lambda}
\newcommand{\Gal}{{\rm Gal}}
\newcommand{\Aut}{{\rm Aut}}
\title[The action of the Morava stabilizer group at height $2$]{The action of the Morava stabilizer group on the coefficients of Morava $E$-theory at height $2$}
\author{Andrew Salch}
\date{January 2025}
\begin{document}

\maketitle

\begin{abstract}
It is an old problem in stable homotopy theory to calculate the action of the full Morava stabilizer group on the ring of homotopy groups of Morava $E$-theory. We solve this problem completely at height $2$ by calculating an explicit closed formula for that action. In particular, this yields an explicit, surprisingly simple closed formula for the action of the automorphism group of a height $2$ formal group law on its Lubin-Tate deformation ring. The formula is of a combinatorial nature, given by sums over certain labelled ordered rooted trees.
\end{abstract}

\tableofcontents

\section{Introduction}
It is an old problem, in stable homotopy theory and in arithmetic geometry, to give an explicit formula for the action of the automorphism group of a finite-height formal group law on its Lubin--Tate deformation ring. Most efforts have focused on the first nontrivial case, the case of height $2$. In this paper we solve this problem completely at height $2$ at all primes by calculating an explicit, elementary closed formula for this group action. 

Before stating the precise result (contained in Theorems \ref{lettered thm A} and \ref{lettered thm B}, below), we will explain what any of this means, why it matters, and the history of the problem.

\subsection{History of the problem}
\label{History of the problem}

Let $\mathbb{G}$ be a one-dimensional formal group law of height $h$ over the finite field $\mathbb{F}_{p^h}$. The Lubin--Tate ring $\Def(\mathbb{G})$, from \cite{MR0238854}, classifies deformations\footnote{Readers seeking a precise statement of what the word ``deformations'' means in this context can consult \cref{Review of Devinatz...}, below. That section also contains a review of other relevant ideas from the deformation theory of formal groups, including some which are known but not often stated explicitly.} of $\mathbb{G}$ to Artinian local commutative rings with residue field $\mathbb{F}_{p^h}$. The ring $\Def(\mathbb{G})$ is isomorphic to the power series algebra $W(\mathbb{F}_{p^h})[[u_1,u_2, \dots ,u_{h-1}]]$ over the Witt ring $W(\mathbb{F}_{p^h})$. To be perfectly explicit: the Witt ring $W(\mathbb{F}_{p^h})$ isomorphic to the ring $\hat{\mathbb{Z}}_p(\zeta_{p^h-1})$ of $p$-adic integers with a primitive $(p^h-1)$th root of unity adjoined.

In stable homotopy theory, the automorphism group $\Aut(\mathbb{G})$ of the formal group law $\mathbb{G}$ is called the {\em full Morava stabilizer group of height $h$}, after Morava's insights about the applications of $\Aut(\mathbb{G})$ to computational problems in stable homotopy theory in \cite{MR782555} (see \cite[chapter 6]{MR860042} for a textbook treatment). We sketch some of those applications below, in \cref{applications for...}. 

The Morava stabilizer group $\Aut(\mathbb{G})$ acts on $\Def(\mathbb{G})$ in a natural way. The action is trivial in the case $h=1$. However, it is notoriously difficult to describe explicitly for any height $h$ greater than $1$. The group $\Aut(\mathbb{G})$ itself is not so complicated: it is a profinite group, isomorphic to the group of units in the maximal order in the invariant $1/n$ central division algebra over $\mathbb{Q}_p$, and consequently it admits the explicit presentation
\begin{align}
\label{presentation 1} \Aut(\mathbb{G}) &\cong \left( W(\mathbb{F}_{p^h})\langle S \rangle/\left(S^h = p, \omega^{\sigma} S = S\omega \mbox{\ for\ each\ }\omega\in W(\mathbb{F}_{p^h})\right)\right)^{\times}, 
\end{align}
where the function $x\mapsto x^{\sigma}$ is the canonical lift of the Frobenius automorphism of $\mathbb{F}_{p^h}$ to an automorphism of the ring $W(\mathbb{F}_{p^h})$. 

Let us now restrict to the first open case, i.e., the case of height $h=2$. To fix ideas, let $\mathbb{G}$ be the height $2$ Honda formal group law over $\mathbb{F}_{p^2}$; see \cref{Review of Devinatz...} for a precise definition. The Lubin--Tate ring $\Def(\mathbb{G})$ is simply a power series algebra $W(\mathbb{F}_{p^2})[[u_1]]$ on a single generator, $u_1$, the Hazewinkel generator. Using the presentation \eqref{presentation 1}, each element of $\Aut(\mathbb{G})$ can be written uniquely as a $W(\mathbb{F}_{p^2})$-linear combination $\alpha_0 + \alpha_1S$, with $\alpha_0$ a unit. Since $\Aut(\mathbb{G})$ acts on $\Def(\mathbb{G})$ continuously and by $W(\mathbb{F}_{p^2})$-algebra homomorphisms, the question of calculating the action of $\Aut(\mathbb{G})$ on $\Def(\mathbb{G})$ is simply the question of calculating the power series 
\begin{align*}
 (\alpha_0 + \alpha_1S).u_1 &\in W(\mathbb{F}_{p^2})[[u_1]],
\end{align*}
as a function of $\alpha_0$ and $\alpha_1$ (and $p$).

For concreteness, here is an explicit example. 
Let $p=2$, and let $\zeta_3$ denote a primitive cube root of unity in $W(\mathbb{F}_4)$, so that $W(\mathbb{F}_4) = \hat{\mathbb{Z}}_2(\zeta_3)$. Then $1+p \zeta_3 \in W(\mathbb{F}_4)^{\times}\subseteq \Aut(\mathbb{G})$ acts on $u_1$ as follows\footnote{The formulas \eqref{p=2 example} and \eqref{p=3 example} were not calculated using the new theorems in this paper. Rather, we calculated them by writing a software implementation of ideas of Devinatz--Hopkins \cite{MR1333942} for making {\em approximate} calculations of the action of $\Aut(\mathbb{G})$ on $\Def(\mathbb{G})$. The ideas of Devinatz--Hopkins are recalled below in \cref{Review of Devinatz...}. Our software implementation is available on \href{http://asalch.wayne.edu/}{the author's webpage}.}:
\begin{align}\label{p=2 example}
 (1 + \zeta_3 p).u_1 &= -u_1 - u_1^4 - 2 u_1^7 - 4 u_1^{10} - 7 u_1^{13} + 8 u_1^{16} + 168 u_1^{19} + 1099 u_1^{22}  \\
\nonumber &\hspace{10pt} + 5356 u_1^{25} + 20958 u_1^{28} + 60540 u_1^{31} + 57790 u_1^{34} - 820040 u_1^{37}  \\
\nonumber &\hspace{10pt} - 8153100 u_1^{40} - 50769012 u_1^{43} - 248695951 u_1^{46}   \\ 
\nonumber &\hspace{10pt} - 972324035 u_1^{49}- 2637054605 u_1^{52} - 171581978 u_1^{55}   \\
\nonumber &\hspace{10pt} + 60417121849 u_1^{58} + 530425333282 u_1^{61} + 3196481834506 u_1^{64}  \\
\nonumber &\hspace{10pt} + 15298429872978 u_1^{67} + 57330724580351 u_1^{70} \mod u_1^{73}.
\end{align}
The reader who enjoys puzzles is welcome to try to spot a pattern\footnote{The example \eqref{p=2 example} is indeed probably the simplest nontrivial example of the action of an element of $\Aut(\mathbb{G})$ on $u_1$. It has the disadvantage, though, that perhaps it is too close to being trivial: it happens to coincide with the $2$-series of the height $2$ Honda formal group law over $\mathbb{F}_2$. In that sense it is perhaps not representative of the action of a ``randomly-chosen'' element of $\Aut(\mathbb{G})$ on $u_1$. For a somewhat more representative example, let $p=3$. Then $1+ p\sqrt{-1} \in W(\mathbb{F}_9)^{\times}\subseteq \Aut(\mathbb{G})$ acts on $u_1$ as follows:
\begin{align}\nonumber
 \left(1+ p\sqrt{-1}\right).u_1
  &= \frac{1}{5}\left(-3\sqrt{-1} - 4\right)u_1 + \frac{2^4}{5^5}\left(44\sqrt{-1} + 117\right)u_1^5 + \frac{2^4}{5^8}\left(-23708\sqrt{-1} - 23269\right)u_1^9 \\
\nonumber &\hspace{10pt}  + \frac{2^5}{5^{13}}\left(165420188\sqrt{-1} - 42317341\right)u_1^{13} + \frac{2^4}{5^{17}}\left(-1177652602636\sqrt{-1} + 573119181627\right)u_1^{17}\\
\label{p=3 example} &\hspace{10pt} + \frac{2^5}{5^{21}}\left(1430942855147644\sqrt{-1} - 1030288432269333\right)u_1^{21}\\
\nonumber &\hspace{10pt} + \frac{2^7}{5^{25}}\left(-483182830183187406\sqrt{-1} + 1115554534647772267\right)u_1^{25} \\
\nonumber &\hspace{10pt}+ \frac{2^4}{5^{28}}\left(-590648668256168586804\sqrt{-1} - 7472490792127549191047\right)u_1^{29} \mod u_1^{33}.
\end{align}
} that describes the coefficients in \eqref{p=2 example} before glancing at Theorem \ref{lettered thm A}, where the answer is given, and further discussed at the end of \cref{Drawn examples...}.

Here is a survey of the existing calculations of the action of $\Aut(\mathbb{G})$ on $\Def(\mathbb{G})$. All the existing calculations are approximations, i.e., calculations of $(\alpha_0 + \alpha_1S).u_1$ modulo $p$ and some power of $u_1$. 
\begin{itemize}
\item For all heights $h$ and all primes $p$, Chai's 1996 paper \cite[Theorem 1]{MR1387691} calculates $\alpha.u_i$ modulo $(p,u_1^2,u_2^2,\dots ,u_{h-1}^2)$ for all $i$. (Chai gets better accuracy in the case where $\mathbb{G}$ is not only a formal group law, but a formal $A$-module, where $A$ is a complete discrete valuation ring whose residue field is $\mathbb{F}_{p^r}$, specifically in the case $r>1$. In that case, at height $h=2$, Chai gets $(\alpha_0 + \alpha_1S).u_i$ modulo $(p,u_1^{p^r})$. Chai only gets recursive formulas, however, not a closed formula.)
\item In Karamanov's 2006 Ph.D. thesis \cite[section 4.2]{karamanovthesis}, Karamanov calculated $(\alpha_0 + \alpha_1S).u_1$ modulo $(p,u_1^7)$ in the case $p=3$. Slightly higher accuracy is achieved, namely calculations modulo $(p,u_1^{10})$, for certain specific elements $\alpha_0 + \alpha_1S\in \Aut(\mathbb{G})$. These results were published in Karamanov's 2013 paper \cite[section 4.3]{MR3127044} with Henn and Mahowald.
\item Lader's 2013 Ph.D. thesis \cite[section 3.2]{laderthesis} calculates $(\alpha_0 + \alpha_1S).u_1$ modulo $(p,u_1^{2p+1})$ and modulo $(p,u_1^{p+4})$ at all primes.
\item Kohlhaase's 2013 paper \cite[Theorem 1.16]{MR3069363} calculated $(\alpha_0+\alpha_1S).u_1$ modulo $(p,u_1^{2p+2})$, but only ``for a limited number of elements'' $\alpha_0+\alpha_1S\in \Aut(\mathbb{G})$. Kohlhaase also made approximate calculations of the action of $\Aut(\mathbb{G})$ on $\Def(\mathbb{G})$ at arbitrary height, but to lower accuracy, specifically modulo $p+\mathfrak{m}^{p+2}$ where $\mathfrak{m}$ is the maximal ideal in $\Def(\mathbb{G})$. Finally, Kohlhaase made a higher-accuracy calculation \cite[Theorem 1.19]{MR3069363} of the action of the subgroup $W(\mathbb{F}_{p^2})^{\times}$ of $\Aut(\mathbb{G})$ on $\Def(\mathbb{G})$: at height $h=2$ he gets the action modulo $(p,u_1^{2p+4})$ for all primes $p>3$.
\item For all heights $h>2$, Davis's 2022 Ph.D. thesis \cite[Theorem 4.16]{MR4435205} calculates the power series $\alpha.u_{h-1}$ modulo $(p,u_1,u_2, \dots ,u_{h-2},u_{h-1}^{p^{h-1}+1})$ at all primes $p$.
\end{itemize}

\subsection{Motivations for the problem}
\label{applications for...}

There are good reasons why both stable homotopy theorists and $p$-adic geometers have made calculations of the action of $\Aut(\mathbb{G})$ on $\Def(\mathbb{G})$. We describe the stable-homotopical applications first. The formal group law $\mathbb{G}$ has an associated {\em Morava $E$-theory spectrum} $E(\mathbb{G})$, a spectrum in the sense of stable homotopy theory, constructed by Morava in \cite{MR782555}. Morava showed that the degree zero subring $E(G)_0 = \pi_0(E(\mathbb{G}))$ is isomorphic to the Lubin--Tate deformation ring $\Def(\mathbb{G})$, and furthermore that the graded ring $E(\mathbb{G})_* = \pi_*(E(\mathbb{G}))$ is isomorphic to $\Def(\mathbb{G})[u^{\pm 1}]$, that is, $\Def(\mathbb{G})$ (concentrated in degree zero) with a Laurent polynomial generator $u$ adjoined in degree $-2$. The graded ring $\Def(\mathbb{G})[u^{\pm 1}]$ has a description as the classifying ring of deformations of $\mathbb{G}$ marked with a unit; see \cref{Review of Devinatz...} for a precise formulation. The automorphism group $\Aut(\mathbb{G})$ acts naturally on this moduli problem of deformations, hence acts naturally on $E(\mathbb{G})_*$.

Goerss and Hopkins proved \cite{MR2125040} that the Morava stabilizer group $\Aut(\mathbb{G})$ acts on the {\em spectrum} $E(\mathbb{G})$ by $E_\infty$-ring spectrum automorphisms, such that upon taking homotopy groups, the resulting action of $\Aut(\mathbb{G})$ on $E(\mathbb{G})_*$ is the natural action described in the previous paragraph. 
Given a finite CW-complex $X$, there exist spectral sequences \cite{MR2030586}
\begin{align}
\label{dhss 1} E_2^{s,t}\cong H^s(\Aut(\mathbb{G}); E(\mathbb{G})_t(X)) & \Rightarrow \pi_{t-s}\left( (E(\mathbb{G})\wedge X)^{h\Aut(\mathbb{G})}\right)\mbox{\ \ and} \\
\label{dhss 2} E_2^{s,t}\cong H^s\left(\Gal(\mathbb{F}_{p^h}/\mathbb{F}_p); \pi_{t}\left( (E(\mathbb{G})\wedge X)^{h\Aut(\mathbb{G})}\right)\right) & \Rightarrow \pi_{t-s}\left( \pi_{K(h)}X\right),
\end{align}
where $\pi_*(L_{K(h)}X)$ is the Bousfield localization of $X$ at the height $h$ Morava $K$-theory. One can be precise about what kind of object $(E(\mathbb{G})\wedge X)^{h\Aut(\mathbb{G})}$ is, but for present purposes, it does not matter: one can simply think of $\pi_*((E(\mathbb{G})\wedge X)^{h\Aut(\mathbb{G})})$ as an algebraic waypoint between the cohomology $H^*(\Aut(\mathbb{G}); E(\mathbb{G})_*(X))$ and the $K(h)$-local homotopy groups $\pi_*(L_{K(h)}X)$. The second spectral sequence \eqref{dhss 2} is quite simple: note that, if $p\nmid h$, then the spectral sequence collapses to the $s=0$-line, i.e,, $\pi_{*}\left( \pi_{K(h)}X\right)$ is simply the $\Gal(\mathbb{F}_{p^h}/\mathbb{F}_p)$-fixed points in the output of \eqref{dhss 1}. Spectral sequence \eqref{dhss 1} also collapses at the $E_2$-page for sufficiently large primes, namely, for $p>\frac{h^2+h+2}{2}$. 

Hence, at large primes, $\pi_*(L_{K(h)}X)$ simply {\em is} the $\Gal(\mathbb{F}_{p^h}/\mathbb{F}_p)$-fixed points of\linebreak $H^*(\Aut(\mathbb{G});E(\mathbb{G})_*(X))$, up to extensions in the $E_{\infty}$-page of the spectral sequence. The homotopy groups $\pi_*(L_{K(h)}X)$ are used in ``chromatic fracture squares'' to calculate the homotopy groups of $X$ localized at Johnson--Wilson theories, $\pi_*(L_{E(h)}X)$, and the chromatic convergence theorem \cite[Theorem 7.5.7]{MR1192553} ensures that $\pi_*(\holim_{h\rightarrow\infty} L_{E(h)}X)$ agrees with the $p$-localization of the stable homotopy groups of the CW-complex $X$ itself. 

Our point is that one has a means of calculating the notoriously difficult stable homotopy groups of finite CW-complexes (e.g. the stable homotopy groups of spheres) by some process which begins with calculating $H^*(\Aut(\mathbb{G});E(\mathbb{G})_*(X))$. This has been carried out in practice through heights $h\geq 2$ and, in a small handful of cases, at heights $3$ and $4$. Thus far, it seems that the calculation of $H^*(\Aut(\mathbb{G});E(\mathbb{G})_*(X))$ is the hardest part of this process. 

In the base case $X=S^0$, spectral sequences \eqref{dhss 1} and \eqref{dhss 2} jointly take as input\linebreak $H^*(\Aut(\mathbb{G});\Def_*(\mathbb{G}))$, and produce as output the $K(h)$-local stable homotopy groups of spheres $\pi_*(L_{K(h)}S^0)$. At large primes, both spectral sequences collapse immediately. Our point is this: {\em to calculate stable homotopy groups of finite CW-complexes, most especially stable homotopy groups of spheres, one would really like to be able to calculate $H^*(\Aut(\mathbb{G});\Def_*(\mathbb{G}))$. }

However, we are talking about calculating the cohomology of a group action---the action of $\Aut(\mathbb{G})$ on $\Def_*(\mathbb{G})$---for which nobody has been able to write down an explicit formula. To date, calculations of $H^*(\Aut(\mathbb{G}); \Def_*(\mathbb{G}))$ have been made by filtering $\Def_*(\mathbb{G})$ in such a way that one gets a spectral sequence converging to $H^*(\Aut(\mathbb{G}); \Def_*(\mathbb{G}))$ and whose differentials can be calculated in terms of the action of $\Aut(\mathbb{G})$ on $\Def_*(\mathbb{G})$ in some appropriate filtration quotient. This is why the approximate calculations of the action of $\Aut(\mathbb{G})$ on $\Def_*(\mathbb{G})$, surveyed in \cref{History of the problem}, have been fruitful for stable homotopy theory.

We will say less about how the action of $\Aut(\mathbb{G})$ on $\Def(\mathbb{G})$ is used in arithmetic geometry, simply because the author's personal motivation comes more directly from topological problems than from arithmetic-geometric problems. One significant application in $p$-adic geometry is as follows: the Raynaud generic fibre of the formal scheme $\Spf\Def(\mathbb{G})$ is the base of a tower of rigid analytic spaces, the {\em Lubin--Tate tower}, which, in the limit of the tower, admits an action of the profinite group $\Aut(\mathbb{G})\times GL_h(\mathbb{Q}_p)$. Write $LT_j(\mathbb{G})$ for the $j$th rigid analytic space in this tower.
For any prime $\ell\neq p$, the induced action of $\Aut(\mathbb{G})\times GL_h(\mathbb{Q}_p)$ on the compactly-supported \'{e}tale cohomology of the tower, \[\underset{j\rightarrow\infty}{\colim}\ \left( \mathbb{Q}_{\ell}\otimes_{\hat{\mathbb{Z}}_{\ell}} \lim_{i\rightarrow\infty} H^{*}_c(LT_j(\mathbb{G});\mathbb{Z}/\ell^i\mathbb{Z})\right) , \] realizes the $\ell$-adic Jacquet--Langlands correspondence between appropriate irreducible representations of $GL_h(\mathbb{Q}_p)$ and appropriate irreducible representations of the unit group of the invariant $1/h$ central division algebra over $\mathbb{Q}_p$ (i.e., $D_{1/h,\mathbb{Q}_p}^{\times}$, as opposed to $\mathcal{O}_{D_{1/h,\mathbb{Q}_p}}^{\times}\cong \Aut(\mathbb{G})$). The author's understanding of the history is that this fact about the cohomology of $LT(\mathbb{G})$ was conjectured by Carayol \cite{MR1044827}, and proved by Harris and Taylor \cite{MR1876802} and by Henniart \cite{MR1738446}. Our attempt to tell some of this story for a stable-homotopical audience is available in the preprint \cite{salchandstrauch}. 

One would like to understand the action of $\Aut(\mathbb{G})$ on the levels in the Lubin--Tate tower. At height $2$, the action on the base of the tower---to be clear, not only the action {\em rationally} on the generic fibre of $\Spf\Def(\mathbb{G})$ (studied by Gross--Hopkins in \cite{MR1217353} and \cite{MR1263712}), where $p$ has been inverted, but instead the full {\em integral} action on the formal scheme $\Spf\Def(\mathbb{G})$---is the subject of this paper.

\subsection{The main results}
\label{The main results}

Now we state the main results. The reason for the unpredictable behavior of the coefficients in the power series $(\alpha_0 + \alpha_1S).u_1$, for example the coefficients in \eqref{p=2 example} and \eqref{p=3 example}, is that these coefficients turn out to be sums {\em over certain families of labelled trees}. Specifically:
\begin{letteredtheorem} \label{lettered thm A}
Let $\gamma_n\in W(\mathbb{F}_{p^2})$ be the coefficient of $u_1^n$ in the power series $(\alpha_0 + \alpha_1S).u_1$, i.e., 
\begin{align*}
 (\alpha_0 + \alpha_1S).u_1 &= \gamma_0 + \gamma_1 u_1 + \gamma_2 u_1^2 + \gamma_3 u_1^3 + \dots .
\end{align*}
Then $\gamma_n$ is the sum, over all $p$-labelled ordered rooted trees $T$ of weight $n$, of the $(\alpha_0,\alpha_1)$-index of $T$:
\begin{align}\label{formula 34094}
 \gamma_n &= \sum_{\substack{T\in pLT,\\ \wt(T) = n}} \ind_{\alpha_0,\alpha_1}(T).
\end{align}
\end{letteredtheorem}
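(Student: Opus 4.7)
The plan is to derive a functional equation satisfied by the power series $v := (\alpha_0 + \alpha_1 S).u_1$, then solve it by iterative substitution so that the resulting expansion is manifestly indexed by $p$-labelled ordered rooted trees.

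First, I would use the universal property of $\Def(\G) = W(\F_{p^2})[[u_1]]$ together with the explicit form of the $p$-series of the universal deformation of the height-$2$ Honda formal group law $\G$. The element $g = \alpha_0 + \alpha_1 S \in \Aut(\G)$ acts via an isomorphism of deformations, and compatibility with the Hazewinkel generator $u_1$ yields an implicit equation of the form $v = \Phi(\alpha_0, \alpha_1, p, v, v^{\sigma})$, where $\Phi$ is a specific convergent power series built from the formal group operation and the $p$-series. The relation $S^2 = p$ together with $\omega^{\sigma} S = S\omega$ in the presentation \eqref{presentation 1} is what forces the mixing of $\alpha_0$, $\alpha_1$, $p$, and the Frobenius twist into a single controlled equation; this already accounts for both $\alpha_0$ and $\alpha_1$ appearing in the eventual index of a tree, and for the necessity of considering ``$p$-labelled'' structures.

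Next, I would solve this implicit equation by successive approximation, starting from the mod-$u_1$ approximation $\gamma_0$ and refining one power of $u_1$ at a time. Each monomial appearing in the expansion arises from a sequence of choices: at each iteration one picks one of the monomial terms of $\Phi$ and substitutes the current approximation of $v$ (or $v^{\sigma}$) into one of the ``slots'' of that monomial. Recording these choices as an ordered rooted tree --- with the root corresponding to the outermost application of $\Phi$, each node recording which monomial term was selected (determining a $p$-label together with the arity of the node), and the ordering on children encoding the position of the slot --- produces a bijection between monomial contributions to $\gamma_n$ and $p$-labelled ordered rooted trees of weight $n$. The function $\wt(T)$ then matches the total $u_1$-degree contributed by a tree $T$, and the product over nodes of the chosen coefficients is designed to reproduce $\ind_{\alpha_0,\alpha_1}(T)$.

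The conclusion is by induction on $n$: the right-hand side of \eqref{formula 34094} satisfies the same functional equation as $v$, since stripping the root of a tree of weight $n$ exhibits it as a labelled assembly of subtrees of strictly smaller weight whose contributions are accounted for inductively, and both sides agree in the base case. Uniqueness of solutions to the fixed-point equation modulo successive powers of $u_1$ then yields equality. The main obstacle I expect is the combinatorial bookkeeping: the functional equation has infinitely many terms coming from the formal group expansion, and one must verify that no tree is double-counted, that the Frobenius twist required by $\omega^{\sigma} S = S\omega$ lands on precisely the right edges, and that the factors of $p$ arising from $S^2 = p$ attach to exactly the correct node types. Matching all of these features to the specific definitions of ``$p$-labelled ordered rooted tree'', $\wt$, and $\ind_{\alpha_0,\alpha_1}$ used in the paper is the delicate heart of the argument.
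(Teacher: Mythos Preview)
Your high-level strategy matches the paper's: derive a recursion for the coefficients $\gamma_n$, then show by induction that the tree sum $\Gamma_n = \sum_{\wt(T)=n}\ind_{\alpha_0,\alpha_1}(T)$ satisfies the same recursion with the same initial value. The inductive step you sketch (strip the root, reduce to subtrees of strictly smaller weight) is exactly how the paper proves Theorem~\ref{main action thm}.

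The gap is in your first step. You propose to obtain a functional equation $v = \Phi(\alpha_0,\alpha_1,p,v,v^{\sigma})$ directly from the $p$-series of the universal deformation, but you never write this equation down, and there is reason to doubt it takes the required shape. The paper's recursion (Proposition~\ref{u1 formula}) has a very specific structure: it is a sum over pairs $(H,I)\in\J\times\J$ of parity-alternating sequences, with the contribution $(-1)^{|H|}\sigma^{|I|}(\alpha_{|H|+|I|-1})/\pi^{\lfloor(|H|+|I|-1)/2\rfloor}$. This structure is \emph{exactly} the label data $(H_v,I_v)$ and the index formula on a vertex, which is why the tree sum obeys the same recursion. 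That structure does not come from the $p$-series. It comes from the Devinatz--Hopkins Cartier coordinates $w,w_1$, on which $\alpha_0+\alpha_1 S$ acts \emph{linearly} by \eqref{action formula 1}--\eqref{action formula 2}; the pairs $(H,I)$ arise because the change-of-coordinates series $f$ and $f_1$ are themselves sums over $\J^{even}$ and $\J^{odd}$ (Theorem~\ref{w1 calc}, Proposition~\ref{f1 and f formula}). Your proposal omits this entire mechanism.

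A secondary issue: your functional equation involves $v^{\sigma}$, but the action of $\Aut(\G)$ on $\Def(\G)$ is by $W(\F_{p^2})$-algebra automorphisms, so $\sigma$ never acts on the power series $v$ --- it acts only on the parameters $\alpha_0,\alpha_1$. In the paper's recursion, $\sigma$ enters solely through $\sigma^{|I|}(\alpha_{\bullet})$. If instead you were thinking of the Hopf-algebroid/right-unit approach (as in Karamanov, Lader, Davis), the naturally occurring operation is $v\mapsto v^{p^i}$, not $v\mapsto v^{\sigma}$, and the resulting recursion does not obviously organize into pairs of elements of $\J$. Without the Cartier-coordinate input, you have not explained why the combinatorics you get should match the paper's particular definition of a $p$-labelled tree.
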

Theorem \ref{lettered thm A} is a special case of Theorem \ref{main action thm}, below, which handles formal modules and not only formal group laws. (In this introduction, we only present our results as they apply to formal group laws, not the more general results on formal modules which appear in the body of the paper.) 

In \eqref{formula 34094}, $pLT$ is the set of $p$-labelled finite ordered rooted trees. These are elementary combinatorial objects, and we give their definition in a moment, along with the definition of the weight and $(\alpha_0,\alpha_1)$-index of such a tree. 
But first, we give some explanation of why it is worth the reader's trouble to bother with such definitions. Before this paper, trees had not been used in calculations or descriptions of the action of $\Aut(\mathbb{G})$ on $\Def(\mathbb{G})$, and it was not known or conjectured that there was any such relationship. The existing calculations surveyed in \cref{History of the problem} did not use trees, and they gave reasonably simple, understandable formulas for the action of $(\alpha_0 + \alpha_1S).u_1$ in low degrees. Those low degrees turn out to be the degrees $n$ in which there are very few $p$-labelled trees of weight $n$---see for example the proof of Corollary \ref{witt comp cor 1}. Those calculational efforts foundered upon reaching degrees $n$ in which the number of $p$-labelled trees of weight $n$ suddenly grows quickly. If one does not know that the number of summands is indexed by the number of appropriately-labelled trees, then this sudden change in the patterns in the coefficients of the power series $(\alpha_0 + \alpha_1S).u_1$ seems unpredictable and totally unmanageable (see \cref{p=2 example} to get a sense of this). Our point is that one has to introduce trees with labellings, or something equivalent to them, in order to understand the patterns in the coefficients in the action of $(\alpha_0 + \alpha_1S).u_1$.

Now we give the definitions from combinatorics which are required to make sense of the statement of Theorem \ref{lettered thm A}. We start with the most well-known definitions, and proceed to some new ones (e.g. weight and index, which are new definitions). We refer the reader to Definitions \ref{def of rooted tree}, \ref{def of q-labelling}, and \ref{def of index} for full details.
\begin{itemize}
\item A {\em tree} is an connected acyclic graph. 
\item A {\em rooted tree} is a tree with a distinguished choice of vertex. 
\item An {\em ordered rooted tree} is a rooted tree with, for each $n$, a total ordering on the set of vertices of distance $n$ from the root vertex, such that the ordering is preserved by the edges; see \cref{def of rooted tree} for the precise definition.
\item Given vertices $v,w$ in a rooted tree, we say that $w$ is a {\em descendant of $v$} if the path from the root to $w$ passes through $v$. In particular, $v$ is a descendant of $v$ itself.
\item We say that $w$ is a {\em immediate descendant of $v$} if $w$ is a descendant of $v$ and there is an edge from $v$ to $w$.
\item A finite-length sequence of nonnegative integers is {\em parity-alternating} if each of its entries, after the first, is of opposite parity from the previous entry. We write $\J$ for the set of parity-alternating monotone strictly increasing sequences of nonnegative integers which do not begin with an odd number.
\item Given a prime number $p$ and a finite sequence of nonnegative integers $I = (i_1, \dots,i_n)$, we write $QI$ for the sum $p^{i_1} + \dots + p^{i_n}$. We write $\left| I\right|$ for the length of $I$, i.e., $\left| (i_1,\dots ,i_n)\right|=n$.
\item Given a prime number $p$, a {\em $p$-labelling} of an ordered rooted tree $T$ is a choice, for each vertex $v$ of $T$, of an ordered pair $(H_v,I_v)$ of elements of $\J$, called the {\em label of $v$}, which are required to satisfy the following conditions:
\begin{itemize}
\item For each vertex $v$, the number of immediate descendants of $v$ is equal to $QH_v$.
\item The weight of each vertex is positive, and strictly greater than the weight of each of its immediate descendants. Here by the {\em weight} of a vertex, we mean the sum of $QI_w$ taken across all descendants $w$ of $v$:
\begin{align*}
 \wt(v) &= \sum_{\substack{\text{descendants}\\w\text{\ of\ } v}} QI_w.
\end{align*}
\end{itemize}
\item We define the weight of a $p$-labelled tree as the weight of its root vertex.
\item Finally, given a prime $p$ and elements $\alpha_0,\alpha_1$ of an integral domain equipped with an involution $\sigma$, the {\em $(\alpha_0,\alpha_1)$-index} of a vertex $v$ is 
\[ \frac{(-1)^{\left| H_v\right|} \sigma^{\left| I_v\right|}(\alpha_{\left| H_v\right| + \left| I_v\right| - 1})}{p^{\lfloor (\left| H_v\right| + \left| I_v\right|-1)/2\rfloor} \alpha_0},\] where the subscript of $\alpha$ is understood as being defined modulo $2$, and the symbol $\lfloor n\rfloor$ denotes the integer floor of $n$. The $(\alpha_0,\alpha_1)$-index of a $p$-labelled tree $T$ is defined as the product of the $(\alpha_0,\alpha_1)$-indices of all the vertices in $T$.

\end{itemize}
When these notions are applied to the action of $\Aut(\mathbb{G})$ on the Lubin--Tate ring, the involution $\sigma$ is the Galois conjugation on $W(\mathbb{F}_{p^2})$. For notational convenience we sometimes write $\alpha \mapsto \overline{\alpha}$ rather than $\alpha \mapsto \sigma(\alpha)$. Here is an example of a $2$-labelled tree of weight $6$ and $(\alpha_0,\alpha_1)$-index $\frac{-\overline{\alpha}_0^5\alpha_1^2\overline{\alpha}_1}{\alpha_0^8}$:
\[
\xymatrix{
  & & *+[F]\txt{\composite{\txt{(0,1)\\ ()}*\txt{\hspace{30pt}\textcolor{orange}{6}\\ \hspace{30pt}\textcolor{blue}{$\frac{\alpha_1}{\alpha_0}$}}}} \ar[d] \ar[ld] \ar[dr] & \\
 & *+[F]\txt{\composite{\txt{(0,1)\\ ()}*\txt{\hspace{30pt}\textcolor{orange}{3}\\ \hspace{30pt} \textcolor{blue}{$\frac{\alpha_1}{\alpha_0}$}}}} \ar[d]\ar[ld]\ar[rd] & *+[F]\txt{\composite{\txt{()\\ (0)}*\txt{\hspace{30pt}\textcolor{orange}{1}\\ \hspace{30pt}\textcolor{blue}{$\frac{\overline{\alpha}_0}{\alpha_0}$}}}} & *+[F]\txt{\composite{\txt{(0)\\ (0)}*\txt{\hspace{30pt}\textcolor{orange}{2}\\ \hspace{30pt}\textcolor{blue}{$\frac{-\overline{\alpha}_1}{\alpha_0}$}}}}\ar[d] \\
 *+[F]\txt{\composite{\txt{()\\ (0)}*\txt{\hspace{30pt}\textcolor{orange}{1}\\ \hspace{30pt}\textcolor{blue}{$\frac{\overline{\alpha}_0}{\alpha_0}$}}}} & *+[F]\txt{\composite{\txt{()\\ (0)}*\txt{\hspace{30pt}\textcolor{orange}{1}\\ \hspace{30pt}\textcolor{blue}{$\frac{\overline{\alpha}_0}{\alpha_0}$}}}} & *+[F]\txt{\composite{\txt{()\\ (0)}*\txt{\hspace{30pt}\textcolor{orange}{1}\\ \hspace{30pt}\textcolor{blue}{$\frac{\overline{\alpha}_0}{\alpha_0}$}}}} & *+[F]\txt{\composite{\txt{()\\ (0)}*\txt{\hspace{30pt}\textcolor{orange}{1}\\ \hspace{30pt}\textcolor{blue}{$\frac{\overline{\alpha}_0}{\alpha_0}$}}}}
}
\]
Each rectangular box represents a vertex, and the black-colored sequences inside the box corresponding to vertex $v$ are the label $(H_v,I_v)$ of $v$. The sequence written in black color on top is $H_v$, while the sequence written in black color on the bottom is $I_v$. To demonstrate how the weights and indices work, we have marked each vertex with its weight in orange color, and with its  $(\alpha_0,\alpha_1)$-index in blue color. We follow the convention from combinatorics that the root vertex is drawn at the top, with the arrows pointing {\em away} from the root vertex\footnote{Although directions on the edges are not included in the definition of a rooted tree, every rooted tree becomes a directed graph in a canonical way, by making each edge point away from the root vertex. This is well-known in combinatorics. From time to time in this paper we will make some statement like this which will be utterly elementary to a combinatorialist, but which we expect will be useful to readers who, like the author, come to this paper from algebraic topology and do not have a great deal of background in combinatorics.}.


In \cref{Drawn examples...}, we give a more complete stock of examples, by drawing every $p$-labelled tree of weight $\leq 4$, for every $p$.

The proof of Theorem \ref{lettered thm A} takes several steps and involves some preliminary results, which appear in \cref{The recursive formula for...}. The idea behind the proof is quite simple, though:
\begin{enumerate}
\item Work out a recursive formula which determines the coefficient $\gamma_n$, from Theorem \ref{lettered thm A}, in terms of $\gamma_i$ for various $i<n$. (We give this recursive formula in Proposition \ref{u1 formula}.) This leans very heavily on the methods of Devinatz and Hopkins from the paper \cite{MR1333942}. Everything we do in the process of working out this recursive formula is a matter of working out the consequences of what Devinatz and Hopkins have already done. 
\item Write $\delta_n$ for the sum of the $(\alpha_0,\alpha_1)$-indices of the weight $n$ $p$-labelled trees. Work out a recursive formula that determines $\delta_n$ in terms of $\delta_i$ for various $i<n$.
\item Observe that $\delta_0 = \gamma_0$ and that the recursion satisfied by $\delta_0,\delta_1,\dots$ is the same recursion satisfied by $\gamma_0,\gamma_1,\dots$. 
\end{enumerate}

Theorem \ref{lettered thm A} is an elementary closed formula for the action of $\Aut(\mathbb{G})$ on $\Def(\mathbb{G}) = W(\mathbb{F}_{p^2})[[u_1]]$. In Theorem \ref{lettered thm B}, we also provide an elementary closed formula for the action of $\Aut(\mathbb{G})$ on the {\em graded} Lubin--Tate ring $\Def_*(\mathbb{G})\cong \Def(\mathbb{G})[u^{\pm 1}]$, by giving a formula for the action of $\Aut(\mathbb{G})$ on $u$:
\begin{letteredtheorem} \label{lettered thm B}
Let $\theta_0,\theta_1, \dots\in W(\mathbb{F}_{p^2})$ be the coefficients in the power series $(\alpha_0 + \alpha_1S).u = \sum_{n\geq 0} \theta_n\cdot u_1^n\cdot u$. Then
\begin{align}
\label{action on u closed formula 3} 
\theta_n &=  \sum_{i=0}^{n}\left( \left(\overline{\alpha}_1 \sum_{\substack{I\in \J^{odd}\\ QI=i}} \frac{1}{p^{\left(\left| I \right| - 1\right)/2}} + \alpha_0 \sum_{\substack{I\in \J^{even}\\ QI=i}} \frac{1}{p^{\left| I \right|/2}}\right) \cdot \sum_{\substack{\text{sequences}\\ \text{$K$ of positive}\\ \text{integers,}\\ \text{ $\Sigma K = n-i$}}} (-1)^{\left| K\right|} \prod_{k\in K} \left(\sum_{\substack{\text{$I\in \J^{even}$}\\\text{$J\in pLT^{QI}$}\\ \text{$\wt(J) = k$}}} \frac{\ind_{\alpha_0,\alpha_1}(J)}{p^{\left| I\right|/2}}\right) 
\right).
\end{align} 
\end{letteredtheorem}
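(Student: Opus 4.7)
The plan is to parallel the proof of Theorem~\ref{lettered thm A}: use the Devinatz--Hopkins framework to derive a functional description of the action on $u$, and then show that the right-hand side of \eqref{action on u closed formula 3} solves it.

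First I would recognise the right-hand side of \eqref{action on u closed formula 3} as a convolution arising from a ratio of two power series in $u_1$. The inner sum $\sum_K (-1)^{\left|K\right|} \prod_{k \in K} a_k$ over sequences $K$ of positive integers with $\Sigma K = n-i$ is, by the expansion $1/(1+A) = \sum_{m \geq 0} (-1)^m A^m$, precisely the coefficient of $u_1^{n-i}$ in $Q(u_1)^{-1}$, where
\begin{align*}
 Q(u_1) &:= 1 + \sum_{k \geq 1}\left(\sum_{\substack{I \in \J^{even},\\ J \in pLT^{QI},\ \wt(J) = k}} \frac{\ind_{\alpha_0,\alpha_1}(J)}{p^{\left|I\right|/2}}\right) u_1^k.
\end{align*}
Writing $P(u_1) := \sum_i b_i u_1^i$ for the power series whose coefficients $b_i$ are the ``first factor'' in \eqref{action on u closed formula 3}, the identity in Theorem~\ref{lettered thm B} is equivalent to the factored statement
$(\alpha_0 + \alpha_1 S)\cdot u = P(u_1)\cdot Q(u_1)^{-1}\cdot u$.

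Second, I would extract this factorisation from the same Devinatz--Hopkins setup that produces Proposition~\ref{u1 formula}. The action of $\alpha$ on $u$ equals multiplication by the leading coefficient $c_1(u_1) \in \Def(\G)$ of the power series $[\alpha]_F(T)$ representing the induced endomorphism of the universal deformation $F$ of $\G$. The functional equation underlying Proposition~\ref{u1 formula} simultaneously determines the $\gamma_n$ and $c_1(u_1)$: rearranging it, one expects $c_1(u_1) = P(u_1)/Q(u_1)$, in which $P$ collects the ``bare'' contribution of $\alpha_0$ and $\overline{\alpha}_1$ pushed through iterated Frobenius by the Honda logarithm, while $Q$ collects the normalisation required to promote a morphism of formal group laws over $\F_{p^2}$ to a morphism of their deformations.

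Third, I would identify the combinatorial content on each side. The parity structure on $\J$ corresponds to the alternation between the generator $S$ (which conjugates by $\sigma$) and non-$S$ factors in the presentation~\eqref{presentation 1}: sequences in $\J^{even}$ contribute factors of $\alpha_0$, and those in $\J^{odd}$ contribute factors of $\overline{\alpha}_1$, explaining the two summands in the numerator. The denominators $1/p^{\left(\left|I\right|-1\right)/2}$ and $1/p^{\left|I\right|/2}$ count the number of divisions by $p$ performed in inverting the Honda logarithm in order to extract the linear coefficient. The weighted tree-sums $a_k$ in $Q$ are essentially the $\gamma_k$ of Theorem~\ref{lettered thm A}, with the additional $\J^{even}$-sum encoding the freedom to attach an iterated Frobenius string above the root of the $p$-labelled tree.

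The main obstacle I expect is aligning the parity bookkeeping: proving that precisely $\J^{even}$ (rather than all of $\J$) appears in the denominator, and that $\J^{odd}$ appears only paired with $\overline{\alpha}_1$ in the numerator, requires carefully tracking how many Frobenius conjugations occur when one extracts the leading coefficient of $[\alpha]_F(T)$ from the Devinatz--Hopkins recursion and matches its $p$-adic denominators to the floor expressions $p^{\lfloor (\left|H_v\right|+\left|I_v\right|-1)/2\rfloor}$ appearing in the definition of $\ind_{\alpha_0,\alpha_1}$. Once this parity-and-valuation bookkeeping is in place, Theorem~\ref{lettered thm A} feeds directly into identifying $Q(u_1)^{-1}$, and the final formula follows from the geometric series expansion of $P(u_1)Q(u_1)^{-1}$.
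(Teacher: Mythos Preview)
Your overall architecture is the same as the paper's: recognise \eqref{action on u closed formula 3} as the coefficient-extraction of a ratio $P(u_1)/Q(u_1)$, then expand $Q^{-1}$ as a geometric series. Your identifications of $P$ and $Q$ are also correct and match the paper precisely --- by Proposition~\ref{f1 and f formula}, your $P$ is $\overline{\alpha}_1 f_1(u_1)+\alpha_0 f(u_1)$, and by Theorem~\ref{lettered thm A} your $Q$ is $f\bigl((\alpha_0+\alpha_1S).u_1\bigr)$.

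There is, however, a genuine gap in your second step. You propose to extract the identity $c_1(u_1)=P/Q$ from ``the functional equation underlying Proposition~\ref{u1 formula}.'' That equation is \eqref{eq 330249}, and it is obtained by \emph{dividing} the two Devinatz--Hopkins action formulas \eqref{action formula 1} and \eqref{action formula 2}, i.e.\ it records only the action on $w_1 = (ww_1)/w$. In taking that quotient one has discarded exactly the scalar $c_1(u_1)$ you want: \eqref{eq 330249} determines the $\gamma_n$ but carries no information about the action on $u$ or $w$ individually. So ``rearranging it'' cannot produce $c_1(u_1)=P/Q$. The paper avoids this by using \eqref{action formula 2} for $w$ \emph{un-divided}: since $w=f(u_1)\cdot u$, one has
\[
(\alpha_0+\alpha_1S).u \;=\; \frac{(\alpha_0+\alpha_1S).w}{f\bigl((\alpha_0+\alpha_1S).u_1\bigr)}
\;=\; \frac{\overline{\alpha}_1 f_1(u_1)+\alpha_0 f(u_1)}{f\bigl((\alpha_0+\alpha_1S).u_1\bigr)}\cdot u,
\]
which is formula \eqref{power series eq 1} and gives the ratio $P/Q$ in one line. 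From there the paper sets up the recursion $\theta_n = x_n - \sum_{i<n}\theta_i\, y_{n-i}$ and solves it exactly as you describe.

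This also means your ``main obstacle'' is misplaced. Once $P=\overline{\alpha}_1 f_1+\alpha_0 f$ and $Q=f(\Gamma)$ are in hand, the parity bookkeeping you worry about ($\J^{even}$ in the denominator, $\J^{odd}$ paired with $\overline{\alpha}_1$ in the numerator) is not something one has to prove --- it is read off directly from the explicit formulas for $f$ and $f_1$ in Proposition~\ref{f1 and f formula}. No tracking of Frobenius conjugations or floor functions is required beyond what Theorem~\ref{lettered thm A} already supplies.
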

The new notations in \eqref{action on u closed formula 3} are as follows: 
\begin{itemize}
\item $\J^{odd}$ (respectively, $\J^{even}$) is the set of monotone strictly increasing parity-alternating sequences of nonnegative integers, of odd length (respectively, of even length), which do not begin with an odd number,
\item $\Sigma K$ is the sum of the entries in a sequence $K$, 
\item and given a $QI$-tuple $J$ of elements of $pLT$, we write $\wt(J)$ for the sum of weights of the $p$-labelled trees in $J$, and we write $\ind_{\alpha_0,\alpha_1}(J)$ for the product of the $(\alpha_0,\alpha_1)$-indices of the $p$-labelled trees in $J$.
\end{itemize}
Theorem \ref{lettered thm B} is the formal group law case of Theorem \ref{main action thm on u}, a more general result which applies to formal $A$-modules. Most results in this paper are given for formal $A$-modules, with $A$ the ring of integers in a finite field extension of $\mathbb{Q}_p$. The case $A = \hat{\mathbb{Z}}_p$ recovers the case of formal group laws. 

Theorems \ref{lettered thm A} and \ref{lettered thm B}---or better, their formal $A$-module generalizations given below as Theorems \ref{main action thm} and \ref{main action thm on u}---are the main results in this paper, but there are other results scattered throughout which some readers may find useful for some purposes. For example, it was observed by Kohlhaase in \cite{MR3069363} that, at least in low degrees, the action of the subgroup $W(\mathbb{F}_{p^2})^{\times}$ of $\Aut(\mathbb{G})$ on $\Def(\mathbb{G})$ admits a simpler description than the action of $\Aut(\mathbb{G})$ on $\Def(\mathbb{G})$. We show that Kohlhaase's phenomenon holds more generally: Theorem \ref{witt u1 formula} and Corollary \ref{witt action on u thm}, below, specialize Theorems \ref{lettered thm A} and \ref{lettered thm B} to the action of $W(\mathbb{F}_{p^2})^{\times}$ on $\Def_*(\mathbb{G})$, yielding formulas that are indeed notably simpler than those for the full action of $\Aut(\mathbb{G})$. 

Readers who do not want to bother with trees, and just want explicit algebraic formulas in low degrees, may find Corollaries \ref{witt comp cor 1} and \ref{witt comp u cor} to their liking. The proof of Corollary \ref{witt comp cor 1} also contains details and useful methods for counting $p$-labelled trees of specific weights and making explicit calculations using them.

We close with a remark on the possibility of proving analogues of Theorems \ref{lettered thm A} and \ref{lettered thm B} at heights $h>2$. We expect that such generalizations to higher heights ought to be possible, although we haven't tried to work out those generalizations ourselves, and we expect some patience and some work will be required. We do not know what kinds of combinatorial objects would take the place of $p$-labelled trees in height $h>2$ generalizations. 

\subsection{Conventions}
\label{conventions section}
\begin{itemize}
\item In this paper, all formal group laws and formal modules will be implicitly understood to be commutative and one-dimensional.
\item We will often need to refer to a certain power series $f$ introduced by Devinatz and Hopkins in \cite{MR1333942}. We will also sometimes need to refer to the residue degree of an extension of $p$-adic fields, which is also standardly written $f$. To avoid confusion over this clash of notations, we will write $f$ for the Devinatz--Hopkins series, and we will write $\mathfrak{f}$ for the residue degree of a local field extension.
\item We follow a very old-fashioned notational convention \cite{MR0242802} by writing $\hat{\mathbb{Z}}_p$, rather than $\mathbb{Z}_p$, for the ring of $p$-adic integers. 
\item Throughout, we fix a prime $p$ and a finite field extension $F$ of $\mathbb{Q}_p$. We write $\mathcal{O}_F$ for the ring of integers of $F$, and we fix a uniformizer $\pi$ for $\mathcal{O}_F$. We will write $k$ for the residue field $\mathcal{O}_F/\pi$ of $\mathcal{O}_F$, and $q = p^\mathfrak{f}$ for the cardinality of $k$.

If one cares only about the most important case of these ideas, i.e., the base case $F = \mathbb{Q}_p$, then it is not necessary to remember what a uniformizer is, what a ring of integers is, etc. {\bf To the reader who is interested only in formal group laws and not formal modules:} throughout this paper, simply let $F$ be $\mathbb{Q}_p$, let $\mathcal{O}_F = \hat{\mathbb{Z}}_p$, let $\mathfrak{f}=1$, and let $\pi = p = q$. 
\item In this paper, we are primarily concerned with the action of the height $2$ full Morava stabilizer group $\left( W(\mathbb{F}_{p^2})\langle S \rangle/\left(S^2 = p, \overline{\omega}S = S\omega \right)\right)^{\times}$ on the coefficient ring $W(\mathbb{F}_{p^2})[[u_1]][u^{\pm 1}]\cong (E_2)_*$ of height $2$ Morava $E$-theory. The unit group $W(\mathbb{F}_{p^2})^{\times}$ of the Witt ring $W(\mathbb{F}_{p^2})$ has two apparent actions on $W(\mathbb{F}_{p^2})[[u_1]][u^{\pm 1}]$: one by ordinary multiplication in the ring $W(\mathbb{F}_{p^2})$, and one by embedding $W(\mathbb{F}_{p^2})^{\times}$ into the Morava stabilizer group and letting the stabilizer group act on $W(\mathbb{F}_{p^2})[[u_1]][u^{\pm 1}]$. We will use two different notations to distinguish these two actions of an element $\alpha\in W(\mathbb{F}_{p^2})^{\times}$ on an element $x\in W(\mathbb{F}_{p^2})[[u_1]][u^{\pm 1}]$: for the first (i.e., ordinary multiplication in the Witt ring) we will write $\alpha\cdot x$, and for the second (i.e., the Morava action), we will write $\alpha .x$.
\item Throughout, the word ``sequence'' will always mean a sequence {\em of finite length.}
\item We include connectedness as part of the definition of a tree (see Definition \ref{def of rooted tree}).
\end{itemize}

\subsection{Acknowledgments}

This paper has its origins with Mike Hopkins, in 2006, suggesting to the author that the author ought to read \cite{MR1333942} and calculate something with it. In 2024, the author finally read the paper and calculated something with it. The author thanks Mike for conversations about \cite{MR1333942} and for his good suggestion, and apologizes to Mike for a bit of delay in following through.

We are also grateful to Agn\`{e}s Beaudry for answering some questions very helpfully as we worked out early versions of the results in this paper. We also had good conversations with Anish Chedalavada, Austin Maison, and Jared Weinstein about some early cases of these results. We thank them for those conversations. Many of those conversations happened at the December 2024 AIM workshop ``Chromatic homotopy theory and $p$-adic geometry,'' and we are grateful to the organizers of that workshop (Barthel, Schlank, Stapleton, and Weinstein) for inviting the author. We thank the organizers and participants and hosts of the conference for creating an excellent environment for doing mathematics, and for being very patient with the author's many questions.

\section{Review of Devinatz, Gross, and Hopkins' use of Cartier theory at height $2$}
\label{Review of Devinatz...}

Let $p$ be a prime and let $\mathbb{G}$ be the Honda height $2$ formal group law over $\mathbb{F}_{p^2}$. Here are two well-known equivalent characterizations of $\mathbb{G}$:
\begin{enumerate}
\item $\mathbb{G}$ is the $p$-typical formal group law over $\mathbb{F}_{p^2}$ classified by the ring map $BP_*\rightarrow \mathbb{F}_{p^2}$ sending the Hazewinkel generator $v_2$ to $1$, and sending all other Hazewinkel generators $v_n$ of $BP_*$ to zero.  (Here $BP_*$ is the coefficient ring of $p$-primary Brown--Peterson homology, or equivalently, the classifying ring of $p$-typical formal group laws.)
\item $\mathbb{G}$ is the $p$-typical formal group law over $\mathbb{F}_{p^2}$ whose $p$-series is $x^{p^2}$.
\end{enumerate} 
The Lubin--Tate deformation ring $\Def(\mathbb{G})$ of $\mathbb{G}$ is $\Aut(\mathbb{G})$-equivariantly isomorphic to the degree zero homotopy ring of the Morava $E$-theory spectrum $E(\mathbb{G})$:
\begin{align*}
 \pi_0(E(\mathbb{G})) & \cong \Def(\mathbb{G}) \\
 &\cong W(\mathbb{F}_{p^2})[[u_1]].
\end{align*}
Recall that the Lubin--Tate deformation ring $\Def(\mathbb{G})$ is defined by the following property \cite{MR0238854}. Given a Noetherian local $W(\mathbb{F}_{p^2})$-algebra $A$ with residue field $\mathbb{F}_{p^2}$, the set of local\footnote{Recall that a homomorphism between local rings is said to be {\em local} if it sends the maximal ideal of the domain into the maximal ideal of the codomain.} $W(\mathbb{F}_{p^2})$-algebra homomorphisms from $\Def(\mathbb{G})$ to $A$ is in bijection with the set of $\star$-isomorphism classes of formal group laws over $A$ whose reduction modulo the maximal ideal is equal to $\mathbb{G}$. A {\em $\star$-isomorphism} between two such formal group laws $F_1,F_2$ is an isomorphism of formal group laws $F_1\stackrel{\cong}{\longrightarrow} F_2$ over $A$ which reduces, modulo the maximal ideal of $A$, to the identity map.

In the {\em graded} version of Lubin--Tate theory (see \cite[section 1]{MR1333942}), one lets $\Def_*(\mathbb{G})$ be the classifying ring of $\star$-isomorphism classes of pairs $(F,a)$, where $F$ is a formal group law over $A$ and $a$ is a unit in $A$. In the graded context, such pairs $(F_1,a_1), (F_2,a_2)$ are said to be {\em $\star$-isomorphic} if there is an (ungraded) $\star$-isomorphism $\phi: F_1 \stackrel{\cong}{\longrightarrow} F_2$ such that $a_1 = \phi^{\prime}(0)\cdot a_2$. (Note that $\star$-isomorphisms are not required to be strict isomorphisms, i.e., $\phi^{\prime}(0)$ might not be equal to $1$.) The ring $\Def_*(\mathbb{G})$ is $\Aut(\mathbb{G})$-equivariantly isomorphic to the graded coefficient ring $\pi_*(E(\mathbb{G}))\cong W(\mathbb{F}_{p^2})[[u_1]][u^{\pm 1}]$ of the Morava $E$-theory associated to $\mathbb{G}$. The grading is such that $u_1$ is in degree zero, and $u$ is in degree $-2$.

Returning to the ungraded story, the ring $\Def(\mathbb{G})$ has a natural $BP_*$-algebra structure given by classifying the $p$-typicalization $\hat{\mathbb{G}}$ of the universal deformation of $\mathbb{G}$. This $BP_*$-algebra structure is given by the ring map $BP_* \rightarrow \Def(\mathbb{G})$ sending $v_1$ to $u_1$, sending $v_2$ to $1$, and sending the other Hazewinkel generators $v_n$, $n>2$, to zero. Since $\hat{\mathbb{G}}$ is $p$-typical, its log series $\log_{\hat{\mathbb{G}}}(X)$ is of the form $\sum_{n\geq 0} m_n X^{p^n}$ for some $m_0,m_1, m_2, \dots \in \Def(\mathbb{G})\otimes_{\mathbb{Z}}\mathbb{Q}$ with $m_0=1$. 

The remarkable observation which gets the Devinatz--Hopkins paper \cite{MR1333942} started is that, when properly normalized, the sequence of odd log coefficients $m_1, m_3, m_5, \dots$ actually {\em converges} to some power series in the variable $u_1$, and similarly, normalized versions of the even log coefficients $m_0, m_2, m_4, \dots$ also converge. Each $m_n$ is an element\footnote{In fact, Devinatz--Hopkins prove that each $m_n$ is also an element in the smaller ring $W(\mathbb{F}_{p^2})\langle\langle u_1\rangle\rangle \subseteq \Def(\mathbb{G}) \otimes_{\mathbb{Z}}\mathbb{Q}$, the {\em divided power envelope} of $\Def(\mathbb{G})$, but in this paper we will make so little use of this fact that the reader need not know the definition of the divided power envelope. For our purposes it is enough to know that we have proper containments of subrings
$\Def(\mathbb{G}) \subset W(\mathbb{F}_{p^2})\langle\langle u_1\rangle\rangle\subset \Def(\mathbb{G}) \otimes_{\mathbb{Z}}\mathbb{Q}$. It is important, though, that the elements $m_2,m_4,m_6,\dots$ do not live in $\Def(\mathbb{G})$ itself, even after normalization, and neither does the power series which they converge to. The same claim holds for the odd log coefficients $m_1, m_3, \dots$.} in $\Def(\mathbb{G}) \otimes_{\mathbb{Z}}\mathbb{Q}$. Devinatz and Hopkins observe \cite[formula (4.3) in section 4]{MR1333942} that, as a simple consequence of the formula defining the Hazewinkel generators for $BP_*$, the elements $m_0,m_1,m_2, \dots$ satisfy the Fibonacci-like recursion
\begin{align}
\label{dh recursion} m_n &= \frac{u_1^{p^{n-1}}}{p} m_{n-1} + \frac{1}{p} m_{n-2} \mbox{\ \ if\ } n\geq 2,\\
\nonumber m_0 &= 1,\\
\nonumber m_1 &= \frac{u_1}{p}.
\end{align}
Devinatz and Hopkins write $f_1$ for the power series $\lim_{n\rightarrow \infty} p^{n+1}m_{2n+1}$, and $f$ for the power series $\lim_{n\rightarrow \infty} p^n m_{2n}$. The elements $w := u\cdot f$ and $ww_1 := u\cdot f_1$ of $\mathbb{Q}\otimes_{\mathbb{Z}}\pi_{-2}(E(\mathbb{G})) \cong \mathbb{Q}\otimes_{\mathbb{Z}} W(\mathbb{F}_{p^2})[[u_1]]\{ u\}$ are called {\em Cartier coordinates}\footnote{To be clear about notation: we write $W(\mathbb{F}_{p^2})[[u_1]]\{ u\}$ to mean the free $W(\mathbb{F}_{p^2})[[u_1]]$-module on a single generator named $u$.}. The merit of the Cartier coordinates is that the action of $\Aut(\mathbb{G})$ is {\em linear} on the Cartier coordinates. More precisely, by \cite[Proposition 3.3 and Remark 2.21]{MR1333942}, given the element 
\begin{align*}
 \alpha_0 + \alpha_1S &\in \left( W(\mathbb{F}_{p^2})\langle S \rangle/\left(S^2 = p, \overline{\omega}S = S\omega \mbox{\ for\ each\ }\omega\in W(\mathbb{F}_{p^2})\right)\right)^{\times} \cong \Aut(\mathbb{G}),\end{align*}
the action of $\Aut(\mathbb{G})$ on the Cartier coordinates is given by the formulas
\begin{align}
\label{action formula 1} (\alpha_0 + \alpha_1S)\act ww_1 &= \overline{\alpha}_0 ww_1 + p\alpha_1w \\
\label{action formula 2} (\alpha_0 + \alpha_1S)\act w &= \overline{\alpha}_1 ww_1 + \alpha_0w .
\end{align}
The action of $\Aut(\mathbb{G})$ on $\Def_*(\mathbb{G})$ is then determined by the formulas \eqref{action formula 1} and \eqref{action formula 2} together with the fact that the action of $\Aut(\mathbb{G})$ on $\Def_*(\mathbb{G})$ is continuous and by $W(\mathbb{F}_{p^2})$-algebra automorphisms.

Hence one has, at least in principle, a means of making explicit calculation of the action of $\Aut(\mathbb{G})$ on $\Def_*(\mathbb{G})\cong W(\mathbb{F}_{p^2})[[u_1]][u^{\pm 1}]$: convert from the Hazewinkel coordinates $u_1$ and $u$ to the Cartier coordinates $w_1$ and $w$, apply the action of $\Aut(\mathbb{G})$ on the Cartier coordinates, then convert back to Hazewinkel coordinates. 

The papers of Chai \cite{MR1387691} and Kohlhaase \cite{MR3069363} use the Cartier-theoretic methods of Devinatz, Gross, and Hopkins to make their approximate calculations of the action of $\Aut(\mathbb{G})$ on $\Def(\mathbb{G})$ described in \cref{History of the problem}. Other existing works \cite{karamanovthesis}, \cite{MR3127044}, \cite{laderthesis}, \cite{MR4435205} use a different method (essentially using the structure maps of the classifying Hopf algebroid of $p$-typical formal group laws and not-necessarily-strict isomorphisms) which does not involve Cartier coordinates. 

We will work in slightly greater generality than Devinatz and Hopkins do: we fix a finite field extension $F/\mathbb{Q}_p$, write $A$ for its ring of integers, $\pi$ for a fixed choice of uniformizer for $A$, and $q$ for the cardinality of the residue field. For such rings $A$, the moduli theory of formal $A$-modules was shown by Drinfeld \cite{MR0384707} to closely resemble that of formal group laws, and indeed, in the case $F = \mathbb{Q}_p$ (and consequently $A = \hat{\mathbb{Z}}_p$ and $\pi = p=q$), formal group laws over commutative $\hat{\mathbb{Z}}_p$-algebras are the same thing as formal $\hat{\mathbb{Z}}_p$-modules. A formal $A$-module $F$ with logarithm is said to be {\em $A$-typical} if its logarithm is of the form 
\begin{equation}\label{A-typ log}
X + \alpha_1X^1 + \alpha_2X^{q^2} + \alpha_3X^{q^3} + \dots\end{equation} 
for some rational scalars $\alpha_1,\alpha_2,\alpha_3$. We say that $F$ has {\em $A$-height $\geq n$} if $\alpha_1, \dots ,\alpha_{n-1}$ are all zero, i.e., the logarithm \eqref{A-typ log} is congruent to $X + \alpha_nX^{q^n}$ modulo $X^{q^n+1}$. 
There exists \cite{MR2987372} a classifying ring $V^A\cong A[v_1^A,v_2^A,v_3^A,\dots]$ of $A$-typical formal $A$-modules; when $A = \hat{\mathbb{Z}}_p$, it is the tensor product $BP_*\otimes_{\mathbb{Z}}\hat{\mathbb{Z}}_p$, where $BP_*$ is the familiar Brown--Peterson coefficient ring, which classifies $p$-typical formal group laws over commutative $\mathbb{Z}_{(p)}$-algebras. As expected, a formal $A$-module over a commutative $A$-algebra $R$ has $A$-height $\geq n$ if and only if the classifying map $V^A \rightarrow R$ sends $v_1^A,v_2^A, \dots ,v_{n-1}^A$ all to zero.

\section{From Hazewinkel coordinates to Cartier coordinates}

In this section we calculate the Cartier coordinate $w_1$ in terms of the Hazewinkel coordinate $u_1$. We present a self-contained calculation, but the result is not new: an equivalent calculation, yielding the same result (though stated in somewhat different terms), appeared already\footnote{For comparison with \cite{MR1263712}, we note that the power series called $f$ and $f_1$ in \cite{MR1333942} and in the present paper are instead called $\phi_0$ and $\phi_1$ in \cite{MR1263712}.} in \cite[section 25]{MR1263712}, and more generally, at all heights in \cite[section 11 proposition 8]{MR1327148}. 

Henceforth all our results and arguments in this paper will be given for formal $\mathcal{O}_F$-modules, not only formal group laws. See \cref{conventions section} for relevant conventions and notations. The formal module version of the Devinatz--Gross--Hopkins theory is certainly not new: it was studied already by Gross and Hopkins in \cite{MR1263712}, and later by Chai in \cite{MR1387691}, Yu in \cite{MR1327148}, and Kohlhaase in \cite{MR3069363}.
In the setting of formal $\mathcal{O}_F$-modules, the recursion \eqref{dh recursion} becomes
\begin{align}
\label{dh recursion 2} m_n &= \frac{u_1^{q^{n-1}}}{\pi} m_{n-1} + \frac{1}{\pi} m_{n-2} \mbox{\ \ if\ } n\geq 2,\\
\nonumber m_0 &= 1,\\
\nonumber m_1 &= \frac{u_1}{\pi}.
\end{align}
It will be convenient to describe the recursion \eqref{dh recursion 2} in a slightly different way. Let $\mathcal{M}_{2n}$ denote $\pi^{n}m_{2n}$, and let $\mathcal{M}_{2n+1}$ denote $\pi^{n+1}m_{2n+1}$. Then the recursion \eqref{dh recursion} is equivalent to the equalities
\begin{align}
\label{recursion 2} \mathcal{M}_{2n} &= \mathcal{M}_{2n-2} + \frac{u_1^{q^{2n-1}}}{\pi}\mathcal{M}_{2n-1} \mbox{\ \ if\ } n\geq 1,\\
\nonumber \mathcal{M}_{2n+1} &= u_1^{q^{2n}}\mathcal{M}_{2n-2} + \left( 1 + \frac{u_1^{q^{2n-1}+q^{2n}}}{\pi}\right)\mathcal{M}_{2n-1} \mbox{\ \ if\ } n\geq 1,\\
\nonumber \mathcal{M}_0&= 1,\\
\nonumber \mathcal{M}_1&= u_1.
\end{align}
Now, for each $n=0,1,2,\dots$, let $F_n$ denote the power series $\mathcal{M}_{2n+1}/\mathcal{M}_{2n}$. The Cartier coordinate $w_1$ is then simply the limit $\lim_{n\rightarrow\infty} F_n$. By routine algebra, the recursion \eqref{recursion 2} yields a recursion for the sequence $F_0,F_1,\dots$:
\begin{align*}
 F_{n+1} &= \frac{(u_1^{q^{2n+2} + q^{2n+1}} + \pi)F_n + \pi u_1^{q^{2n+2}}}{u_1^{q^{2n+1}}F_n + \pi} \mbox{\ \ if\ }n\geq 0,\\
 F_0 &= u_1.
\end{align*}
In other words, if we consider $PSL_2(W(\mathbb{F}_{p_2}[[u_1]])$ acting on $W(\mathbb{F}_{p_2}[[u_1]])\otimes_{\mathbb{Z}}\mathbb{Q}$ by fractional linear transformations via the usual formula
\begin{align*}
\begin{bmatrix} 
 a & b \\
 c & d
\end{bmatrix} \cdot F &= \frac{aF + b}{cF + d},
\end{align*}
then the Cartier coordinate $w_1$ is described in terms of the Hazewinkel coordinate $u_1$ as an infinite product
\begin{align}
\label{w1 formula 0} w_1 &= \lim_{n\rightarrow \infty} \left( \mathcal{T}_n \mathcal{T}_{n-1} \dots \mathcal{T}_1 \cdot u_1\right),
\end{align}
where $\mathcal{T}_i$ is the matrix
\begin{align*}
 \mathcal{T}_i &= \begingroup
\renewcommand*{\arraystretch}{2.3}
\begin{bmatrix}
 1 + \frac{u_1^{q^{2i} + q^{2i-1}}}{\pi} & u_1^{q^{2i}} \\ 
 \frac{u_1^{q^{2i-1}}}{\pi} & 1 
\end{bmatrix} .
\endgroup
\end{align*}

\begin{definition}\label{def of J}\leavevmode
\begin{itemize}
\item
We will say that a monotone strictly increasing sequence of integers $(i_1, i_2, \dots , i_{\ell})$ is {\em parity-alternating} if the elements $i_1, \dots ,i_{\ell-1}$ are each of opposite parity from the next element in the sequence. 
\item 
We write $\left| I\right|$ for the length of a sequence $I$, i.e., $\left| (i_1, i_2, \dots , i_{\ell})\right| = \ell$. 
\item 
We will write $\J$ for the set of monotone strictly increasing, parity-alternating sequences\footnote{We remind the reader of the convention, from Conventions \ref{conventions section}, that {\em all sequences in this paper are of finite length.}} of nonnegative integers which do not begin with an odd number\footnote{To be absolutely clear: each member of $\J$ is either empty or has even first entry. For example, the members of $\J$ include sequences like $(0,1)$ and $(0,1,4)$ and $(2,3,6)$ and the empty sequence $()$. The set $\J$ does not include the sequence $(1,4)$, whose first entry is odd, or the sequence $(0,4)$, which is not parity-alternating.}
.
\item
We will write $\J^{odd}$ for the subset of $\J$ consisting of the sequences of odd length, and we will write $\J^{even}$ for the subset of $\J$ consisting of the sequences of even length.
\item We will write $\Sigma I$ for the sum of the elements of a sequence $I$, i.e., $\Sigma (i_1, \dots ,i_{\ell}) = \sum_{j=1}^{\ell} i_j$.
\item Given a prime power $q$ and a sequence $I = (i_1, \dots ,i_{\ell})$, we will write $QI$ for the sum $\sum_{j=1}^{\ell} q^{i_j}$.
\end{itemize}
\end{definition}

\begin{theorem}\label{w1 calc}
The Cartier coordinate $w_1\in W(\mathbb{F}_{p^2})[[u_1]]\otimes_{\mathbb{Z}}\mathbb{Q}$ is equal to the quotient power series 
\begin{align}
\label{w1 formula} w_1 &=  \frac{\sum_{I\in \J^{odd}} u_1^{QI}/\pi^{\left(\left|I\right| - 1\right)/2}}{\sum_{I\in \J^{even}} u_1^{QI}/\pi^{\left|I\right|/2}}.\end{align}
\end{theorem}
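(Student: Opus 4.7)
The plan is to prove a $u_1$-adically truncated refinement of \eqref{w1 formula} for the auxiliary series $\mathcal{M}_n$ by induction on $n$, and then pass to the limit. Writing $\J^{even}_{\leq N}$ (respectively, $\J^{odd}_{\leq N}$) for the subset of $\J^{even}$ (respectively, $\J^{odd}$) consisting of sequences with every entry in $\{0,1,\ldots,N\}$, the identity I aim to establish is
\begin{align*}
 \mathcal{M}_{2n} &= \sum_{I\in\J^{even}_{\leq 2n-1}} \frac{u_1^{QI}}{\pi^{|I|/2}}, & \mathcal{M}_{2n+1} &= \sum_{I\in\J^{odd}_{\leq 2n}} \frac{u_1^{QI}}{\pi^{(|I|-1)/2}}.
\end{align*}
The base cases $\mathcal{M}_0 = 1$ and $\mathcal{M}_1 = u_1$ correspond respectively to the empty sequence in $\J^{even}$ and to the singleton $(0)\in\J^{odd}$.

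For the inductive step I would apply the recursion \eqref{recursion 2}. Because elements of $\J^{even}$ have even length with parities alternating starting from even, a nonempty element of $\J^{even}_{\leq 2n-1}$ ends in an odd integer, hence either lies already in $\J^{even}_{\leq 2n-3}$ (contributing $\mathcal{M}_{2n-2}$) or ends precisely in $2n-1$; in the latter case deleting the final entry gives a bijection with $\J^{odd}_{\leq 2n-2}$, and this bijection multiplies each summand by exactly $u_1^{q^{2n-1}}/\pi$, matching the second summand in the recursion for $\mathcal{M}_{2n}$. For the odd recursion, elements of $\J^{odd}_{\leq 2n}$ end in an even integer, so they split into three classes: (a) those containing no entry in $\{2n-1,2n\}$, matching $\mathcal{M}_{2n-1}$; (b) those ending in $2n$ with no $2n-1$ present, obtained from $\J^{even}_{\leq 2n-3}$ by appending $2n$ and yielding $u_1^{q^{2n}}\mathcal{M}_{2n-2}$; and (c) those ending in the pair $2n-1, 2n$, obtained from $\J^{odd}_{\leq 2n-2}$ by appending both and yielding $(u_1^{q^{2n-1}+q^{2n}}/\pi)\mathcal{M}_{2n-1}$. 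The fourth a priori possible case of containing $2n-1$ without $2n$ is excluded because the last entry of an element of $\J^{odd}$ must be even.

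Finally, convergence of the ratios is automatic: any $I\in\J$ with some entry exceeding $2n$ satisfies $QI\geq q^{2n+1}$, so in each fixed $u_1$-degree the partial sums $\mathcal{M}_{2n}$ and $\mathcal{M}_{2n+1}$ stabilize to the full sums appearing in \eqref{w1 formula}. The limiting denominator has constant term $1$ (from the empty sequence) and is therefore a unit in $W(\mathbb{F}_{p^2})[[u_1]]\otimes_{\mathbb{Z}}\mathbb{Q}$; hence $w_1 = \lim_n F_n = \lim_n \mathcal{M}_{2n+1}/\mathcal{M}_{2n}$ equals the ratio of the limits, which is exactly the right-hand side of \eqref{w1 formula}. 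The only step demanding real care is the four-way case analysis for the odd recursion, and in particular noticing that the parity-alternating condition is exactly what rules out the missing fourth case.
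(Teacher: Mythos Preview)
Your proof is correct. It differs from the paper's only in packaging: the paper reformulates recursion \eqref{recursion 2} as a fractional-linear-transformation recursion $F_{n+1} = \mathcal{T}_{n+1}\cdot F_n$, proves by induction a closed form for each of the four entries of the matrix product $\mathcal{T}_n\cdots\mathcal{T}_1$ (as sums over parity-alternating sequences drawn from $\{1,\dots,2n\}$ or $\{1,\dots,2n-1\}$ with prescribed starting parity), and then applies that matrix to $u_1$ to obtain the numerator and denominator of \eqref{w1 formula}. You instead bypass the matrices entirely and induct directly on the scalar recursion \eqref{recursion 2}, proving the closed forms for $\mathcal{M}_{2n}$ and $\mathcal{M}_{2n+1}$ separately via the append-the-top-entries bijections. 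The underlying combinatorics is identical (both arguments are tracking how parity-alternating sequences grow at the top), but your route is the more elementary one: it needs neither the $PSL_2$ setup nor the auxiliary sequences starting with odd first entry. The paper's matrix formulation buys a uniform treatment of numerator and denominator and makes the $PSL_2$-action picture visible, which is pleasant but not needed for the theorem itself.
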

\begin{proof}
By a routine induction, one shows that the product $\mathcal{T}_n \dots \mathcal{T}_2\mathcal{T}_1$ is equal to the matrix $\begin{bmatrix} a_n & b_n \\ c_n & d_n\end{bmatrix}$, where:
\begin{itemize}
\item $a_n$ is the sum of $u_1^{QI}/\pi^{\left|I\right|/2}$ over all even-length, parity-alternating, strictly monotone increasing sequences $I$ whose first element is odd and whose members are drawn from the set $\{ 1, \dots , 2n\}$,
\item $b_n$ is the sum of $u_1^{QI}/\pi^{\left(\left|I\right|-1\right)/2}$ over all odd-length, parity-alternating, strictly monotone increasing sequences $I$ whose first element is even and whose members are drawn from the set $\{ 1, \dots , 2n\}$,
\item $c_n$ is the sum of $u_1^{QI}/\pi^{\left(\left|I\right|+1\right)/2}$ over all odd-length, parity-alternating, strictly monotone increasing sequences $I$ whose first element is odd and whose members are drawn from the set $\{1, \dots ,2n-1\}$,
\item and $d_n$ is the sum of $u_1^{QI}/\pi^{\left|I\right|/2}$ over all even-length, parity-alternating, strictly monotone increasing sequences $I$ whose first element is even and whose members are drawn from the set $\{ 1, \dots ,2n-1\}$.
\end{itemize}
Formula \eqref{w1 formula} then follows from routine algebra together with formula \eqref{w1 formula 0}.
\end{proof}
For example, modulo $u_1^{q^5}$, we have
\begin{align*}
 w_1 &\equiv \frac{(u_1^{1+q+q^2+q^3+q^4})/\pi^2 + (u_1^{1+q+q^2} + u_1^{1+q+q^4} + u_1^{1+q^3+q^4} + u_1^{q^2+q^3+q^4})/\pi + (u_1 + u_1^{q^2} + u_1^{q^4})}{(u_1^{1+q+q^2+q^3})/\pi^2 + (u_1^{1+q} + u_1^{1+q^3} + u_1^{q^2+q^3})/\pi + 1} .
\end{align*}

One can also calculate the power series $f_1$ and $f$ of Devinatz, Gross, and Hopkins, defined in the paragraph following \eqref{dh recursion}, directly (i.e., not simply the quotient $w_1 = f_1/f$) using the same recursive method as used in the proof of Theorem \ref{w1 calc}, yielding:
\begin{proposition}\label{f1 and f formula}
The power series $f_1$ and $f$ are equal to the numerator and the denominator, respectively, of the right-hand side of \eqref{w1 formula}.
\end{proposition}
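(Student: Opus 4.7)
The plan is to observe that the same matrix-product argument used in the proof of Theorem \ref{w1 calc} yields a closed form not only for the quotient $w_1 = f_1/f$, but for $\mathcal{M}_{2n+1}$ and $\mathcal{M}_{2n}$ individually, from which $f_1$ and $f$ are obtained as their respective limits.

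First I would rewrite the recursion \eqref{recursion 2} directly in matrix form: the two recursive equations at level $n$ are exactly equivalent to
\begin{align*}
\begin{bmatrix} \mathcal{M}_{2n+1} \\ \mathcal{M}_{2n}\end{bmatrix} &= \mathcal{T}_n \begin{bmatrix} \mathcal{M}_{2n-1} \\ \mathcal{M}_{2n-2}\end{bmatrix},
\end{align*}
with the same matrix $\mathcal{T}_n$ already defined immediately below \eqref{w1 formula 0}. Iterating from the initial vector $(\mathcal{M}_1,\mathcal{M}_0)^{T}=(u_1,1)^{T}$ and using the description of the entries of $\mathcal{T}_n\cdots\mathcal{T}_1 = \begin{bmatrix} a_n & b_n \\ c_n & d_n\end{bmatrix}$ established in the proof of Theorem \ref{w1 calc}, one obtains the closed formulas $\mathcal{M}_{2n+1} = a_n u_1 + b_n$ and $\mathcal{M}_{2n} = c_n u_1 + d_n$. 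Taking limits as $n\to\infty$ in the $u_1$-adic topology then yields $f_1 = \lim_n (a_n u_1 + b_n)$ and $f = \lim_n (c_n u_1 + d_n)$.

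It then remains to identify these two limits with the numerator and denominator of \eqref{w1 formula}. This is pure bookkeeping: multiplying $a_n$ by $u_1$ amounts to prepending a $0$ to the indexing sequence, which sends the collection of even-length, parity-alternating, strictly increasing sequences in $\{1,\dots,2n\}$ starting with an odd number to the collection of odd-length, parity-alternating, strictly increasing sequences in $\{0,1,\dots,2n\}$ whose first entry equals $0$; and the exponent of $\pi$ transforms from $|I|/2$ to $(|(0,I)|-1)/2$, matching the exponent in the numerator of \eqref{w1 formula}. Meanwhile, $b_n$ contributes precisely those sequences in $\J^{odd}$ whose first entry is a positive even number $\leq 2n$. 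Together, $a_n u_1 + b_n$ is exactly the partial sum of the numerator of \eqref{w1 formula} over sequences $I\in \J^{odd}$ with entries bounded by $2n$; the analogous verification shows $c_n u_1 + d_n$ equals the corresponding partial sum over $\J^{even}$ (with the empty sequence contributing the leading $1$ from $d_n$). Passing to the limit finishes the proof.

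I do not expect any serious obstacle here: the heart of the computation, namely the combinatorial description of the matrix entries $a_n,b_n,c_n,d_n$, is already carried out in the proof of Theorem \ref{w1 calc}, and Devinatz--Hopkins already guarantee convergence of $\mathcal{M}_{2n},\mathcal{M}_{2n+1}$. The only real task is the careful parity/indexing check in the previous paragraph, confirming that the ``prepend $0$'' operation on indexing sequences correctly bijects the two contributions $a_n u_1$ and $b_n$ (respectively $c_n u_1$ and $d_n$) onto the $\J^{odd}$ (respectively $\J^{even}$) partial sum, with matching powers of $\pi$.
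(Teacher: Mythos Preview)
Your proposal is correct and follows essentially the same approach as the paper, which simply remarks that the recursive method from the proof of Theorem \ref{w1 calc} computes $f_1$ and $f$ directly. You have made explicit precisely what the paper leaves implicit: the matrix form of the recursion \eqref{recursion 2}, the resulting closed forms $\mathcal{M}_{2n+1}=a_nu_1+b_n$ and $\mathcal{M}_{2n}=c_nu_1+d_n$, and the bijection (via prepending $0$) identifying the partial sums with the numerator and denominator of \eqref{w1 formula}.
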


\section{The recursive formula for the action of $\Aut(\mathbb{G})$ on $u_1$}
\label{The recursive formula for...}

Given an element $\omega \in W(\mathbb{F}_{q^2})$, we write $\sigma(\omega)$ or $\overline{\omega}$ for the image of $\omega$ under the canonical lift of the nontrivial Galois automorphism of $\mathbb{F}_{q^2}$ over $\mathbb{F}_q$ to an automorphism of the Witt ring $W(\mathbb{F}_{q^2})$. 
There is a well-known isomorphism (e.g. see \cite[Proposition 1.7(2)]{MR0384707}):
\begin{align*} \Aut(\mathbb{G}) &\cong \left(\mathcal{O}_F\otimes_{W(\mathbb{F}_q)} W(\mathbb{F}_{q^2})\langle S \rangle/\left(S^2 = \pi,\overline{\omega} S = S\omega \ \ \forall \omega\in W(\mathbb{F}_{q^2})\right)\right)^{\times}.\end{align*}
Consequently any given element of $\Aut(\mathbb{G})$ has a unique expression of the form $\alpha_0 + \alpha_1S$ with $\alpha_0,\alpha_1\in W(\mathbb{F}_{q^2})$ and with $\alpha_0$ a unit in $W(\mathbb{F}_{q^2})$.
Since the action of $\Aut(\mathbb{G})$ on $\Def(\mathbb{G}) \cong W(\mathbb{F}_{q^2})[[u_1]]$ is by continuous $W(\mathbb{F}_{q^2})$-algebra automorphisms, if we can give a formula for the action of $a_0+a_1S\in \Aut(\mathbb{G})$ on $u_1$, this extends in a straightforward way to a formula for the action of $\Aut(\mathbb{G})$ on any element of $\Def(\mathbb{G})$. The main result of this section is Proposition \ref{u1 formula}, which is a {\em recursive} formula of this kind. We will not get to a {\em closed} formula of this kind until Theorem \ref{main action thm}, but the proof of that theorem requires the recursion given by Proposition \ref{u1 formula}.

Proposition \ref{u1 formula} uses some notations, namely $\left| K\right|,\Sigma K,QK$, and $\J$, which were defined in Definition \ref{def of J}. We also use the common notation $\lfloor n\rfloor$ for the integer floor of a rational number $n$.
\begin{proposition}\label{u1 formula}
\begin{align}\label{u1 formula a}
 (\alpha_0 + \alpha_1S) . u_1 
  &= \sum_{n\geq 1} \gamma_n u_1^n, 
\end{align}
where $\gamma_0,\gamma_1,\gamma_2,\dots\in W(\mathbb{F}_{q^2})$ are given recursively by the formula 
\begin{align}
\label{gamman formula} \gamma_n &= \frac{1}{\alpha_0} 
\sum_{\substack{\text{sequences $K$ of}\\ \text{positive integers,}\\ K\neq (n)}}
\left( \prod_{k\in K} \gamma_k\right) \sum_{\substack{H,I\in \J : \\ QH  = \left| K \right| ,\\ QI = n - \Sigma K }} (-1)^{\left| H\right|} \frac{\sigma^{\left| I\right|} (\alpha_{\left|H\right| + \left|I\right| - 1})}{\pi^{\lfloor(\left|H\right| + \left|I\right|-1)/2\rfloor}}.
\end{align}
The subscript of $\alpha_{\left| H \right| + \left| I\right|-1}$ is to be understood as being defined modulo $2$.
\end{proposition}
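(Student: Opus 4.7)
The plan is to transport the action of $g := \alpha_0 + \alpha_1 S$ from $u_1$ to the Cartier coordinate $w_1$, where equations \eqref{action formula 1}--\eqref{action formula 2} (in their formal $\mathcal{O}_F$-module form, with $\pi$ in place of $p$) make the action linear in $w$ and $ww_1$, to deduce a functional equation for $v_1 := g.u_1 = \sum_{n\geq 1}\gamma_n u_1^n$, and then to read off $\gamma_n$ from the coefficient of $u_1^n$. Dividing the formula for $g.(ww_1)$ by that for $g.w$ yields the M\"obius identity
\[ g.w_1 \;=\; \frac{\overline{\alpha}_0 w_1 + \pi\alpha_1}{\overline{\alpha}_1 w_1 + \alpha_0}. \]
By Proposition \ref{f1 and f formula} one has $w_1 = f_1(u_1)/f(u_1)$, and since $g$ fixes the coefficients of $f$ and $f_1$ (which lie in $F$), we have $g.f(u_1) = f(v_1)$ and $g.f_1(u_1) = f_1(v_1)$. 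Cross-multiplying the M\"obius identity then produces the key functional equation
\[ \overline{\alpha}_1 f_1(u_1) f_1(v_1) + \alpha_0 f(u_1) f_1(v_1) - \overline{\alpha}_0 f_1(u_1) f(v_1) - \pi\alpha_1 f(u_1) f(v_1) \;=\; 0. \qquad (\star) \]

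Next, expand $f$ and $f_1$ using Theorem \ref{w1 calc}, and expand each factor $v_1^{QH}$ multinomially as $\sum_{K:\,|K|=QH} \gamma_K u_1^{\Sigma K}$, where $K$ ranges over ordered tuples of positive integers (positivity is automatic because $\gamma_0=0$) and $\gamma_K := \prod_{k\in K}\gamma_k$. The coefficient of $u_1^n$ on the left of $(\star)$ becomes a sum $\sum_{K,H,I} c(K,H,I)\gamma_K = 0$ indexed by sequences $K$ of positive integers together with pairs $(H,I)\in \J\times\J$ satisfying $QH = |K|$ and $QI = n-\Sigma K$. The four summands of $(\star)$ partition these pairs bijectively according to the four parity classes of $(|H|,|I|)$: term $1$ gives both odd, term $2$ gives $|H|$ odd and $|I|$ even, term $3$ gives $|H|$ even and $|I|$ odd, and term $4$ gives both even. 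The only $K$ whose product $\gamma_K$ has $\gamma_n$ as a factor is $K=(n)$, which forces $QH = 1$ and $QI = 0$, hence $H=(0)\in\J^{odd}$ and $I=()\in\J^{even}$; this configuration is supported only by term $2$ and contributes $\alpha_0\gamma_n$. Isolating $\alpha_0\gamma_n$ and dividing by the unit $\alpha_0$ produces \eqref{gamman formula}.

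The principal work is the parity bookkeeping: for each of the four parity classes one verifies that the sign $(-1)^{|H|}$, the Frobenius twist $\sigma^{|I|}$, the parity-dependent index $\alpha_{|H|+|I|-1\bmod 2}$, and the floor-rounded $\pi$-power $\pi^{\lfloor(|H|+|I|-1)/2\rfloor}$ all match the datum produced by $(\star)$. For instance, in the case when $|H|$ and $|I|$ are both even, the prefactor $-\pi\alpha_1$ absorbs one power of $\pi$ from $\pi^{(|H|+|I|)/2}$, leaving $-\alpha_1/\pi^{(|H|+|I|-2)/2}$, and the identity $(|H|+|I|-2)/2 = \lfloor(|H|+|I|-1)/2\rfloor$ holds precisely because $|H|+|I|$ is even; after transposition to the opposite side of the equation the sign becomes $+\alpha_1$, matching $(-1)^{|H|}\sigma^{|I|}(\alpha_1) = \alpha_1$. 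The remaining three cases are handled by an analogous short calculation, and a single four-row table completes the verification.
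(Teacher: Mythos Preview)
Your proof is correct and follows essentially the same route as the paper: derive the M\"obius identity for $g.w_1$ from \eqref{action formula 1}--\eqref{action formula 2}, cross-multiply to obtain the functional equation $(\star)$ (the paper's equation \eqref{eq 330249}), expand $f$ and $f_1$ via Proposition~\ref{f1 and f formula}, sort the four products by the parities of $|H|$ and $|I|$, and isolate the unique $K=(n)$ summand. The paper presents the parity bookkeeping by writing out all four cases at once in the display preceding \eqref{eq 330251} and then collapsing them into the single formula $(-1)^{1+|H|}\sigma^{|I|}(\alpha_{|H|+|I|-1})/\pi^{\lfloor(|H|+|I|-1)/2\rfloor}$, whereas you verify one case in detail and leave the rest to a table; the content is the same.
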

For any fixed choice of integer $n$, the condition $QI = n - \Sigma K$ in \eqref{gamman formula} ensures that the sum is taken only over sequences $K$ whose entries are less than $n$. Hence the outermost sum in \eqref{gamman formula} can equally well be regarded as the sum over all finite-length sequences $K$ of integers taken from the set $\{1, \dots ,n-1\}$. 

To be clear, the sum in \eqref{gamman formula} {\em does} include the case of the empty sequence $K = ()$. Before proving Proposition \ref{u1 formula}, it helps build intuition and familiarity if we consider how the formula \eqref{gamman formula} plays out for low values of $n$:
\begin{description}
\item[If $n=0$] then the sum in \eqref{gamman formula} is empty, i.e., $\gamma_0 = 0$.
\item[If $n=1$] then the sum in \eqref{gamman formula} has a single summand, corresponding to the triple of sequences $K = ()$, $H = ()$, $I = (0)$. The resulting equation is $\gamma_1 = \overline{\alpha}_0/\alpha_0$.
\item[If $n=2$] then the sum in \eqref{gamman formula} has a single summand, corresponding to the triple of sequences $K=(1),H=(0),I=(0)$. The resulting equation is $\gamma_2 = -\gamma_1\frac{\overline{\alpha}_1}{\alpha_0} = \frac{-\overline{\alpha}_0\cdot\overline{\alpha}_1}{\alpha_0^2}$.
\item[If $n=3$] then the sum in \eqref{gamman formula} has summands corresponding to the following triples of sequences:
\begin{itemize}
\item $K = (2),H=(0),I = (0)$,
\item and the triples $K=(),H=(),I=(0,1)$ and $K = (1,1,1),H=(0,1),I=()$ if and only if $q=2$.
\end{itemize}
The resulting equation is 
\begin{align}
\nonumber \gamma_3 &= \left\{\begin{array}{ll} 
  -\gamma_2\frac{\overline{\alpha}_1}{\alpha_0} & \mbox{\ if\ } q\neq 2 \\
  -\gamma_2\frac{\overline{\alpha}_1}{\alpha_0} + \frac{\alpha_1}{\alpha_0} + \gamma_1^3\frac{\alpha_1}{\alpha_0} & \mbox{\ if\ } q=2 \\
\end{array}\right.\\
\label{eq 1901} &= \left\{\begin{array}{ll} 
  \frac{\overline{\alpha}_0\cdot \overline{\alpha}_1^2}{\alpha_0^3} & \mbox{\ if\ } q\neq 2 \\
 \frac{N(\alpha_0)\cdot \overline{\alpha}_1^2 + (\alpha_0^3 + \overline{\alpha}_0^3)\cdot \alpha_1}{\alpha_0^4} & \mbox{\ if\ } q=2,
\end{array}\right.
\end{align}
where $N$ denotes the norm $N(x) = x\cdot \overline{x}$ in $W(\mathbb{F}_{q^2})$.
\item[If $n=4$] then the sum in \eqref{gamman formula} has summands corresponding to the following triples of sequences:
\begin{itemize}
\item $K = (3),H=(0),I = (0)$, 
\item $K = (),H=(),I=(0,1)$ and $K=(1,1,1,1),H=(0,1),I=()$ if and only if $q=3$,
\item and the following triples if and only if $q=2$:
\begin{itemize}
\item $K = (1),H=(0),I = (0,1)$,
\item $K=(1,1,2),H=(0,1),I=()$ and $K=(1,2,1),H=(0,1),I=()$ and $K=(2,1,1),H=(0,1),I=()$, which are permutations of one another under the evident action of the symmetric group on $K$,   
\item $K=(1,1,1),H=(0,1),I=(0)$,
\item $K=(1,1,1,1),H=(2),I=()$,
\item and $K= (), H=(),I=(2)$.
\end{itemize}
\end{itemize}
The resulting equation is 
\begin{align*}
 \gamma_4 &= \left\{\begin{array}{ll} 
  -\gamma_3\frac{\overline{\alpha}_1}{\alpha_0} & \mbox{\ if\ } q\neq 2,3, \\
  -\gamma_3\frac{\overline{\alpha}_1}{\alpha_0} + \frac{\alpha_1}{\alpha_0} + \gamma_1^4\frac{\alpha_1}{\alpha_0} & \mbox{\ if\ } q=3, \\
  -\gamma_3\frac{\overline{\alpha}_1}{\alpha_0} - \gamma_1\frac{1}{\pi} + 3\gamma_1^2\gamma_2\frac{\alpha_1}{\alpha_0} + \gamma_1^3\frac{\overline{\alpha}_0}{\pi \alpha_0} - \gamma_1^4 + \frac{\overline{\alpha}_0}{\alpha_0} & \mbox{\ if\ } q=2 
\end{array}\right.\\
&=\left\{\begin{array}{ll} 
  -\frac{\overline{\alpha}_0\cdot \overline{\alpha}_1^3}{\alpha_0^4}  & \mbox{\ if\ } q\neq 2,3, \\
  \frac{(\alpha_0^4 + \overline{\alpha}_0^4)\alpha_1 - N(\alpha_0)\cdot \overline{\alpha}_1^3}{\alpha_0^5} & \mbox{\ if\ } q=3, \\
   \frac{N(\alpha_0)\cdot \left(-\overline{\alpha}_1^3 + (1-\frac{1}{\pi})(\alpha_0^3 - \overline{\alpha}_0^3)\right) - N(\alpha_1)\cdot \left( \alpha_0^3 + 4\overline{\alpha}_0^3\right)}{\alpha_0^5} & \mbox{\ if\ } q=2 .
\end{array}\right.
\end{align*}
\end{description}
\begin{proof}[Proof of Proposition \ref{u1 formula}]
Write $\Gamma(u_1)$ for the power series $\sum_{n\geq 1}\gamma_n u_1^n$ of \eqref{u1 formula a}. In terms of the Cartier coordinate $w_1$, we have
\begin{align*}
 (\alpha_0 + \alpha_1S). w_1 &= \frac{\overline{\alpha}_0w_1 + \pi \alpha_1}{\overline{\alpha}_1w_1 + \alpha_0},
\end{align*}
by the formal $A$-module analogue of formula \eqref{action formula 1}. 

Recall from Proposition \ref{f1 and f formula} that the power series $f_1(u_1)$ is equal to the numerator $\sum_{I\in \J^{odd}} u_1^{QI}/\pi^{\left(\left|I\right| - 1\right)/2}$ of the right-hand side of formula \eqref{w1 formula}, while the power series $f(u_1)$ is equal to the denominator $\sum_{I\in \J^{even}} u_1^{QI}/\pi^{\left|I\right|/2}$.
We have the equalities of power series
\begin{align}
\nonumber \frac{f_1(\Gamma(u_1))}{f(\Gamma(u_1))}
  &= \frac{\overline{\alpha}_0 w_1 + \pi \alpha_1}{\overline{\alpha}_1w_1 + \alpha_0}\\
\nonumber  &= \frac{\overline{\alpha}_0f_1 + \pi \alpha_1f}{\overline{\alpha}_1f_1 + \alpha_0f},\mbox{\ \ \ i.e.,} \\
\label{eq 330249} f_1(\Gamma(u_1))\cdot (\overline{\alpha}_1f_1 + \alpha_0f)
  &= f(\Gamma(u_1)) \cdot (\overline{\alpha}_0f_1 + \pi \alpha_1f).
\end{align}
Expanding each side of \eqref{eq 330249} using Proposition \ref{f1 and f formula} yields the next equality:
\begin{align}
\nonumber 0 &= \sum_{n\geq 1} u_1^n \sum_{\substack{\text{sequences $K$ of}\\ \text{positive integers}}} \left( \prod_{k\in K} \gamma_k\right) \left( 
   \sum_{\substack{H,I\in \J:\\ \left| H\right|\text{\ is\ odd},\\ \left| I\right|\text{\ is\ even}, \\ \left|K\right| = QH,\\ QI + \Sigma K = n}} \frac{\alpha_0}{\pi^{(\left| H\right| + \left| I\right| - 1)/2}}
 - \sum_{\substack{H,I\in \J:\\ \left| H\right|\text{\ is\ even},\\ \left| I\right|\text{\ is\ odd}, \\ \left|K\right| = QH,\\ QI + \Sigma K = n}} \frac{\overline{\alpha}_0}{\pi^{(\left| H\right| + \left| I\right| - 1)/2}} \right. \\ 
\nonumber  & \ \ \ \ \ \ \ \ \ \ \ \ \ \ \ \ \ \ \ \ \ \ \ \ \ \ \ \ \ \ \ \ \ \ \ \ \ \ \ \ \ \ \ \left. + \sum_{\substack{H,I\in \J:\\ \left|H\right|,\left| I\right|\text{\ are\ odd}, \\ \left|K\right| = QH,\\ QI + \Sigma K = n}} \frac{\overline{\alpha}_1}{\pi^{(\left| H\right| + \left| I\right|)/2 - 1}}
 - \sum_{\substack{H,I\in \J:\\ \left|H\right|,\left| I\right|\text{\ are\ even}, \\ \left|K\right| = QH,\\ QI + \Sigma K = n}} \frac{\alpha_1}{\pi^{(\left| H\right| + \left| I\right|)/2 - 1}} \right) \\
\nonumber &=  \sum_{n\geq 1} u_1^n \sum_{\substack{\text{sequences $K$ of}\\ \text{positive integers}}} \left( \prod_{k\in K} \gamma_k\right) \sum_{\substack{H,I\in \J: \\ \left|K\right| = QH,\\ QI + \Sigma K = n}} (-1)^{1 + \left| H\right|} \frac{\sigma^{\left| I\right|}(\alpha_{\left| H \right| + \left| I\right|-1})}{\pi^{\lfloor\left( \left| H \right| + \left| I \right| - 1\right)/2\rfloor}}, \mbox{\ \ \ i.e.,}\\
\label{eq 330251}
 0 &=  \sum_{\substack{\text{sequences $K$ of}\\ \text{positive integers}}} \left( \prod_{k\in K} \gamma_k\right) \sum_{\substack{H,I\in \J: \\ \left|K\right| = QH,\\ QI + \Sigma K = n}} (-1)^{1 + \left| H\right|} \frac{\sigma^{\left| I\right|}(\alpha_{\left| H \right| + \left| I\right|-1})}{\pi^{\lfloor\left( \left| H \right| + \left| I \right| -1\right)/2\rfloor}}, \mbox{\ \ \ for\ each\ } n.
\end{align}

One of the summands in the sum \eqref{eq 330251}, corresponding to $K = (n)$ and $H = (0)$ and $I = ()$, is $\gamma_n  \alpha_0$. Solving \eqref{eq 330251} for $\gamma_n$ yields the formula \eqref{gamman formula}.
\end{proof}

\section{The closed formula for the action of $\Aut(\mathbb{G})$ on $u_1$}

In this section we improve on the recursive formula given in Proposition \ref{u1 formula} by obtaining a {\em closed} formula for the action of $\Aut(\mathbb{G})$ on $u_1$. There is a small price to be paid for a closed formula, however: we must accept a sum over certain labelled ordered rooted trees, in the sense of combinatorics. We review the relevant definitions from combinatorics (e.g. see \cite[Appendix]{MR2868112}) a bit more leisurely than we did in \cref{The main results}. 
\begin{definition}\label{def of rooted tree}
An {\em ordered rooted tree} is a tree (i.e., a connected acyclic graph) $T$ equipped with a choice of vertex, called the {\em root}, and an ordered partition of the remaining vertices into $n\geq 0$ disjoint nonempty subgraphs $T_1, \dots ,T_n$, each of which is equipped with the structure of a rooted tree. 
The root vertices of the subgraphs $T_1, \dots ,T_n$ are called {\em immediate descendants} of the root of $T$.
For each vertex $v\in T$, there is a unique ordered rooted subtree\footnote{To be clear, by a ``rooted subtree'' of $T$ we mean a subtree $S$ of $T$ equipped with a root---not necessarily the same vertex as the root of $T$!---such that, if we equip $S$ and $T$ with directions on the edges in the usual way (i.e., all edges point away from the root), then the direction on each edge in $S$ agrees with its direction in $T$.} $T_v$ of $T$ of which $v$ is the root. The vertices of $T_v$ are called the {\em descendants} of $v$; they are simply the vertices $w\in T$ such that every path from the root of $T$ to $w$ passes through $v$. (Note that, in particular, $v$ is a descendant of $v$.)
\end{definition}
Explicit examples of ordered rooted trees are depicted in \cref{Drawn examples...}.

We will need several new definitions, Definitions \ref{def of q-labelling} and Definition \ref{def of index}. In these definitions, we will freely refer to some notations introduced at the beginning of \cref{The recursive formula for...}: given a sequence $I = (i_1, \dots, i_m)$, we write $\left| I\right|$ for the length of the sequence, i.e., $m$. If we are also given a prime power $q$, we write $QI$ for the sum $q^{i_1} + \dots + q^{i_m}$. 
\begin{definition}\label{def of q-labelling}
Given a prime power $q$ and an ordered rooted tree $T$, a {\em $q$-pre-labelling of $T$} is the following data. For each vertex $v$ of $T$, we specify an ordered pair of elements $(H_v,I_v)$ of $\J$, called the {\em label of $v$}, satisfying the following condition:
\begin{enumerate}
\item The number of immediate descendants of $v$ is equal to $QH_v$.
\end{enumerate}

Given a $q$-pre-labelling of $T$, the {\em weight} of a vertex $v\in T$ is defined as the sum 
\begin{align*}
 \wt(v) &= \sum_{\substack{\text{descendants}\\w\text{\ of\ } v}} QI_w.
\end{align*}

A $q$-pre-labelling of $T$ is called a {\em $q$-labelling} if the following condition is also satisfied:
\begin{enumerate}
\item[(2)] The weight of each vertex is positive, and strictly greater than the weight of each of its immediate desendants.
\end{enumerate}

The weight $\wt(T)$ of the $q$-labelled tree $T$ itself is defined to be the weight of the root of $T$.

We write $qLT$ for the set of finite ordered rooted $q$-labelled trees.
\end{definition}

\begin{definition}\label{def of index}
Suppose we are given an ordered pair of elements $(\alpha_0,\alpha_1)$ in an integral domain $R$ equipped with a choice of element $\pi\in R$ and an involution $\sigma$, with $\alpha_0\neq 0$.
The {\em $(\alpha_0,\alpha_1)$-index} of a $q$-labelled ordered rooted tree $T$ is defined as the product:
\begin{align}
\label{index def} \ind_{\alpha_0,\alpha_1}(T) &= \frac{1}{\alpha_0} \prod_{v\in T} \frac{(-1)^{\left| H_v\right|} \sigma^{\left| I_v\right|}(\alpha_{\left| H_v\right| + \left| I_v\right| - 1})}{\pi^{\lfloor (\left| H_v\right| + \left| I_v\right| - 1)/2\rfloor}},
\end{align}
where the subscript of $\alpha$ is understood as being defined modulo $2$.
\end{definition}

As in \cref{conventions section}, in Theorem \ref{main action thm} $A$ denotes the ring of integers in any finite field extension of $\mathbb{Q}_p$, $q$ denotes the cardinality of the residue field of $A$, $\pi$ denotes a fixed choice of uniformizer for $A$, and $\mathbb{G}$ denotes the Honda height $2$ formal $A$-module over $\mathbb{F}_{q^2}$, as defined at the start of \cref{Review of Devinatz...}. 
\begin{theorem}\label{main action thm}
Let $\alpha_0,\alpha_1\in W(\mathbb{F}_{q^2})$, with $\alpha_0$ a unit in $W(\mathbb{F}_{q^2})$. Then \[ \alpha_0 + \alpha_1S \in  \left(\mathcal{O}_F\otimes_{W(\mathbb{F}_q)}W(\mathbb{F}_{q^2})\langle S \rangle/\left(S^2 = \pi,\overline{\omega} S = S\omega \ \ \forall \omega\in W(\mathbb{F}_{q^2})\right)\right)^{\times} \cong \Aut(\mathbb{G})\] acts on the Hazewinkel coordinate $u_1$ for the Lubin-Tate ring $W(\mathbb{F}_{q^2})[[u_1]]$ as follows:
\begin{align}
\nonumber (\alpha_0 + \alpha_1S).u_1 &= \sum_{T\in qLT} \ind_{\alpha_0,\alpha_1}(T)\cdot u_1^{\wt(T)} \\
\label{sum 003941} &= \sum_{n\geq 1} \sum_{\substack{T\in qLT,\\ \wt(T) = n}} \ind_{\alpha_0,\alpha_1}(T)\cdot u_1^n.
\end{align}
\end{theorem}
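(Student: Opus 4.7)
The plan is to reduce the identity \eqref{sum 003941} to the recursion already established in Proposition \ref{u1 formula}. Write $\delta_n := \sum_{T \in qLT,\, \wt(T) = n} \ind_{\alpha_0,\alpha_1}(T)$, so the right-hand side of \eqref{sum 003941} equals $\sum_{n \geq 1} \delta_n u_1^n$. Since the recursion \eqref{gamman formula} uniquely determines $\gamma_n$ from $\gamma_0, \ldots, \gamma_{n-1}$ and $\gamma_0 = 0$ is forced by the empty sum, it suffices to verify that (i) $\delta_0 = 0$, and (ii) the sequence $(\delta_n)_{n \geq 0}$ satisfies the same recursion \eqref{gamman formula} for every $n \geq 1$.

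Statement (i) is immediate from Definition \ref{def of q-labelling}: every vertex of a $q$-labelled tree has strictly positive weight, so $qLT$ contains no tree of weight $0$. For statement (ii), I would partition the set of $q$-labelled trees of weight $n$ according to the label $(H_r, I_r) \in \J \times \J$ of the root $r$ and the ordered tuple $K = (\wt(T_1), \ldots, \wt(T_{QH_r}))$ of weights of the subtrees $T_1, \ldots, T_{QH_r}$ rooted at the immediate descendants of $r$. The equality $\wt(T) = QI_r + \Sigma K = n$ forces $QI_r = n - \Sigma K$, and the branching condition of Definition \ref{def of q-labelling}(1) forces $QH_r = |K|$; these are precisely the constraints appearing in the inner sum of \eqref{gamman formula}. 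The multiplicative nature of the index over the decomposition of vertices $V(T) = \{r\} \sqcup V(T_1) \sqcup \cdots \sqcup V(T_{QH_r})$ expresses $\ind_{\alpha_0,\alpha_1}(T)$ as the product of a root-only contribution, which is exactly $\alpha_0^{-1}(-1)^{|H_r|} \sigma^{|I_r|}(\alpha_{|H_r|+|I_r|-1})\pi^{-\lfloor(|H_r|+|I_r|-1)/2\rfloor}$, times $\prod_i \ind_{\alpha_0,\alpha_1}(T_i)$. Summing over all $T$ with $\wt(T) = n$ and regrouping then produces $\delta_n$ as the right-hand side of \eqref{gamman formula} with each $\gamma_k$ replaced by $\delta_k$.

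The careful checkpoint in this argument is that the admissible range of $K$ matches the range in \eqref{gamman formula}, namely finite sequences of positive integers other than $(n)$. The strict-decrease condition of Definition \ref{def of q-labelling}(2) requires each entry $w_i$ of $K$ to satisfy $w_i < n$: when $|K| \geq 2$ this follows automatically from positivity of the entries together with $\Sigma K \leq n$; when $|K| = 0$ the condition is vacuous (corresponding to the case $QH_r = 0$, so that the root has no descendants and $QI_r = n$); and when $|K| = 1$ it excludes precisely the sequence $K = (n)$. I anticipate that the main piece of delicate bookkeeping is tracking how the factor of $1/\alpha_0$ in $\ind_{\alpha_0,\alpha_1}$ distributes through the recursive decomposition, so that exactly one overall $1/\alpha_0$ appears on the right-hand side of the recursion for $\delta_n$, consistent with the corresponding $1/\alpha_0$ in \eqref{gamman formula}. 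Once this bookkeeping is done, induction on $n$ closes the argument.
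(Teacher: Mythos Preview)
Your proposal is correct and follows essentially the same argument as the paper's own proof: both decompose a weight-$n$ tree at its root into the root label $(H_r,I_r)$ and the ordered tuple of subtrees, use multiplicativity of the index across this decomposition to show that $(\delta_n)$ satisfies recursion \eqref{gamman formula}, and conclude $\delta_n=\gamma_n$ by induction. Your anticipated ``delicate bookkeeping'' with $1/\alpha_0$ is not in fact delicate once one uses the per-vertex form of the index (as stated in the introduction), under which the multiplicative formula you wrote is already exactly right and yields the single overall $1/\alpha_0$ directly.
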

\begin{proof}
Let $\Gamma_n$ denote the coefficient of $u_1^n$ in the right-hand side of \eqref{sum 003941}, i.e., $\Gamma_n$ is the sum of the $(\alpha_0,\alpha_1)$-indices of all ordered rooted trees $q$-labelled trees of weight $n$. Let $\gamma_n$ denote the coefficient of $u_1^n$ in the power series $(\alpha_0 + \alpha_1S).u_1$, i.e., the symbol $\gamma_n$ has the same meaning that it has everywhere else in this paper (e.g. in Proposition \ref{u1 formula}). We need to show that $\gamma_n = \Gamma_n$ for all $n$. Both $\gamma_n$ and $\Gamma_n$ are zero for $n\leq 0$. In the case $n=1$, it is straightforward to verify that $\Gamma_1 = \sigma(\alpha_0)/\alpha_0$, since for any prime power $q$, the only $q$-labelled ordered rooted tree of weight $1$ is a tree with a single vertex, with label $(H = (), I = (0))$. In the examples following Proposition \ref{u1 formula}, we saw that $\gamma_1 = \overline{\alpha}_0/\alpha_0$. Hence $\Gamma_1 = \gamma_1$. 

From here we will proceed by induction. Suppose $T$ is a $q$-labelled ordered rooted tree $T$ of weight $n$. Write $r$ for the root of $T$. Let $T_1, \dots ,T_m$ be the disjoint nonempty subgraphs of $T\backslash \{r\}$, as in Definition \ref{def of rooted tree}. The given $q$-labelling on $T$ restricts to a $q$-labelling on each of the subtrees $T_1, \dots ,T_m$. The $q$-labelled ordered tree $T$ is furthermore uniquely determined by specifying the ordered $m$-tuple of $q$-labelled ordered rooted trees $(T_1, \dots ,T_m)$, together with the label $(H_r, I_r)$ on the root $r$. The only constraints on the label $(H_r,I_r)$ are:
\begin{itemize}
\item $H_r$ and $I_r$ must be in $\J$, i.e., they must be monotone strictly increasing, parity-alternating sequences of nonnegative integers which do not begin with an odd number,
\item $QH_r = m$, and
\item the weight $n=\wt(r)$ is strictly greater than the sum of the weights of each of the subtrees $T_1, \dots ,T_m$.
\end{itemize}
On the $(\alpha_0,\alpha_1)$-indices, \eqref{index def} yields the relationship
\begin{align}
\label{index eq 1} \alpha_0 \cdot \ind_{\alpha_0,\alpha_1}(T) 
  &= \frac{(-1)^{\left| H_r\right|} \sigma^{\left| I_r\right|}(\alpha_{\left| H_r\right| + \left| I_r\right| - 1})}{\pi^{\lfloor (\left| H_r\right| + \left| I_r\right| - 1)/2\rfloor}} \cdot \prod_{i=1}^m \alpha_0\cdot \ind_{\alpha_0,\alpha_1}(T_i) 
\end{align}
between the indices of $T$ and of $T_1, \dots ,T_m$.

Our point is that there is bijection between, on one hand, the set of $q$-labelled ordered rooted trees of weight $n$, and on the other hand, the set of ordered triples $(H_r,I_r,K)$, where $K$ is a finite-length ordered sequence of $q$-labelled ordered rooted trees of weight $<n$, and $H_r,I_r\in \J$ satisfy the conditions:
\begin{itemize}
\item $QH_r = \left| K \right|$, 
\item and the sum of the weights of the $q$-labelled trees in $K$ is equal to $n - QI_r$.
\end{itemize}
From this bijection, and from its predictable effect \eqref{index eq 1} on the $(\alpha_0,\alpha_1)$-indices, we have that the sum $\Gamma_n$ of the $(\alpha_0,\alpha_1)$-indices of the $q$-labelled ordered rooted trees of weight $n$ satisfies the following recursion:
\begin{align}
\label{gamman recursion 5}\Gamma_n &= \frac{1}{a_0} 
\sum_{\substack{\text{sequences $K$ of}\\ \text{positive integers,}\\ K\neq (n)}}
\left( \prod_{k\in K} \Gamma_k\right) \sum_{\substack{H,I\in \J : \\ QH  = \left| K \right| ,\\ QI = n - \Sigma K }} (-1)^{\left| H\right|} \frac{\sigma^{\left| I\right|} (a_{\left|H\right| + \left|I\right| - 1})}{\pi^{\lfloor (\left|H\right| + \left|I\right|-1)/2\rfloor}}.
\end{align}
Recursion \eqref{gamman recursion 5} together with the value of $\Gamma_1$ suffice to determine all the higher values $\Gamma_2,\Gamma_3,\dots$. But recursion \eqref{gamman recursion 5} is the same as the recursion \eqref{gamman formula} which determines $\gamma_2,\gamma_3,\dots$ from knowledge of $\gamma_1$. Hence $\Gamma_n = \gamma_n$ for all $n$.\end{proof}

\begin{remark}
Let $q = p^{\mathfrak{f}}$. In the limit as $\mathfrak{f} \rightarrow \infty$, formula \eqref{sum 003941} simplifies to 
\begin{align}
\label{stable formula}  (\alpha_0 + \alpha_1S).u_1
   &= \frac{\overline{\alpha}_0 u_1}{\alpha_0 \cdot (1 + \overline{\alpha}_1 u_1)}.
\end{align}
Formula \eqref{stable formula} also arises from taking the limit of \eqref{sum 003941} as $p\rightarrow \infty$. At least for now, this simplification is only a curiosity, as the author does not know of any algebraic or topological application for \eqref{stable formula}.
\end{remark}

\section{The action of $W(\mathbb{F}_{q^2})^\times \subseteq \Aut(\mathbb{G})$ on $u_1$}

There is an evident copy of the unit group $W(\mathbb{F}_{q^2})^\times$ of the Witt ring inside\linebreak $\left(\mathcal{O}_F\otimes_{W(\mathbb{F}_q)} W(\mathbb{F}_{q^2})\langle S \rangle/\left(S^2 = \pi,\overline{\omega} S = S\omega \ \ \forall \omega\in W(\mathbb{F}_{q^2})\right)\right)^{\times} \cong \Aut(\mathbb{G})$. If $F/\mathbb{Q}_p$ is unramified, then this unit group is a maximal abelian closed subgroup of $\Aut(\mathbb{G})$. If we are content to describe the action of $W(\mathbb{F}_{q^2})^{\times}$ on the Lubin-Tate ring $\Def(\mathbb{G})$, rather than describing the action of the entirety of $\Aut(\mathbb{G})$, then some simplifications are possible. This tendency was observed already by Kohlhaase in \cite[Theorem 1.19]{MR3069363}, who gave an approximate description (i.e., a description modulo $(\pi,u_1)^{2q+4}$) of the action of $W(\mathbb{F}_{q^2})^\times$ on $\Def(\mathbb{G})$. In this section we will give an {\em exact} description of that group action on $\Def(\mathbb{G})$, as a special case of Proposition \ref{u1 formula}, under the assumption that $q = p^\mathfrak{f}$ for {\em odd} $\mathfrak{f}$. (This of course includes the most important case, the base case $\mathfrak{f}=1$.)

\begin{lemma}\label{degree concen lemma}
Let $q = p^\mathfrak{f}$ for an odd integer $\mathfrak{f}$.
Given an element $\alpha\in W(\mathbb{F}_{q^2})^\times\subseteq\Aut(\mathbb{G})$,
let $\gamma_0,\gamma_1,\gamma_2,\dots\in W(\mathbb{F}_{q^2})$ be as in Proposition \ref{u1 formula}, i.e., $\alpha  . u_1= \sum_{n\geq 1} \gamma_n u_1^n$. Then $\gamma_n=0$ for all $n\nequiv 1\mod p+1$.
\end{lemma}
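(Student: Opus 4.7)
The plan is to apply Theorem \ref{main action thm} directly. Since $\alpha \in W(\mathbb{F}_{q^2})^{\times}$ corresponds to $\alpha_0 + \alpha_1 S$ with $\alpha_1 = 0$, the $(\alpha_0, 0)$-index of a $q$-labelled tree $T$ is zero as soon as some vertex $v$ of $T$ satisfies $|H_v| + |I_v| - 1 \equiv 1 \pmod 2$, since then the corresponding factor in \eqref{index def} contains $\alpha_1 = 0$. Hence $\gamma_n$ is a sum of indices over those $q$-labelled trees of weight $n$ whose every vertex $v$ satisfies $|H_v| + |I_v|$ \emph{odd}, i.e., at each vertex exactly one of $|H_v|, |I_v|$ is odd and the other is even.

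Next I would compute $QH_v$ and $QI_v$ modulo $p+1$ using the parity-alternating structure of $\J$. Since $\mathfrak{f}$ is odd and $p \equiv -1 \pmod{p+1}$, we have $q \equiv -1 \pmod{p+1}$, so $q^i \equiv (-1)^i \pmod{p+1}$. For a sequence $I = (i_1, \dots, i_{\ell}) \in \J$, the entries alternate starting with an even integer, so the residues $(-1)^{i_j}$ are $+1, -1, +1, -1, \dots$. Therefore
\[
QI \equiv \begin{cases} 1 \pmod{p+1} & \text{if } |I| \text{ is odd}, \\ 0 \pmod{p+1} & \text{if } |I| \text{ is even}, \end{cases}
\]
and similarly for $QH_v$. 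Combining with the constraint that $|H_v| + |I_v|$ is odd, for every vertex $v$ exactly one of $QH_v, QI_v$ is $\equiv 1 \pmod{p+1}$ and the other is $\equiv 0$, so their sum is $\equiv 1 \pmod{p+1}$.

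Finally I would sum over all vertices of $T$ and use two elementary tree identities. The total number of immediate descendants summed over all vertices equals the number of edges, which is $|T| - 1$ (each non-root vertex is an immediate descendant of exactly one vertex), so $\sum_{v \in T} QH_v = |T| - 1$. The weight of $T$ is, by definition, the weight of the root, which equals $\sum_{v \in T} QI_v$. Adding these and reducing modulo $p+1$ gives
\[
(|T| - 1) + \wt(T) = \sum_{v \in T} (QH_v + QI_v) \equiv |T| \pmod{p+1},
\]
so $\wt(T) \equiv 1 \pmod{p+1}$ for every tree contributing to $\gamma_n$. Hence $\gamma_n = 0$ whenever $n \not\equiv 1 \pmod{p+1}$. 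There is no real obstacle in this argument; the only subtlety is that the hypothesis $\mathfrak{f}$ odd is used essentially to obtain $q \equiv -1 \pmod{p+1}$ (for $\mathfrak{f}$ even, $q \equiv 1$ instead, and the cancellations disappear).
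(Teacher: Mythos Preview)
Your proof is correct, and it takes a genuinely different route from the paper's. The paper proves the lemma by induction on $n$, using the recursive formula \eqref{gamman formula} from Proposition \ref{u1 formula} specialized to $\alpha_1=0$: assuming $\gamma_k=0$ for $k<n$ with $k\not\equiv 1$, it shows that every nonzero summand in the recursion forces $n\equiv 1\pmod{p+1}$, via the same congruence $QI\equiv |I|\bmod 2\pmod{p+1}$ that you use. You instead bypass the induction entirely by applying the closed tree formula of Theorem \ref{main action thm}: after observing that $\alpha_1=0$ kills every tree with some vertex having $|H_v|+|I_v|$ even, you sum $QH_v+QI_v$ over all vertices and invoke the two tree identities $\sum_v QH_v=|T|-1$ and $\sum_v QI_v=\wt(T)$ to conclude $\wt(T)\equiv 1$. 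Your argument is shorter and induction-free, at the cost of relying on the heavier Theorem \ref{main action thm}; the paper's argument is logically more elementary, needing only the recursion. Both use the hypothesis that $\mathfrak{f}$ is odd in exactly the same place, namely to get $q\equiv -1\pmod{p+1}$.
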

\begin{proof}
By induction. From Proposition \ref{u1 formula} it is immediate that $\gamma_0=0$. Suppose $n$ is an integer, $n\nequiv 1\mod p+1$, and suppose we have already shown that $\gamma_k=0$ for all $k<n$ such that $k\nequiv 1 \mod p+1$.
Recall, from Definition \ref{def of J}, that $\J^{even}$ (respectively, $\J^{odd}$) is the set of members of $\J$ which are sequences of even length (respectively, odd length).
The special case $\alpha_0=\alpha$ and $\alpha_1=0$ of equation \eqref{gamman formula} from Proposition \ref{u1 formula} takes the form
\begin{align}\label{gamman formula 2}
\gamma_n &= 
\sum_{\substack{\text{sequences $K$ of}\\ \text{positive integers,}\\ K\neq (n)}}
\left( \prod_{k\in K} \gamma_k\right) \left( \sum_{\substack{H\in \J^{even},\\ I\in \J^{odd} : \\ QH  = \left| K \right| ,\\ QI = n - \Sigma K }}  \frac{\overline{\alpha}/\alpha}{\pi^{(\left|H\right| + \left|I\right| -1)/2}}
 - \sum_{\substack{H\in \J^{odd},\\ I\in \J^{even} : \\ QH  = \left| K \right| ,\\ QI = n - \Sigma K }}  \frac{1}{\pi^{(\left|H\right| + \left|I\right| -1)/2}}\right).
\end{align}
Suppose that $K$ is a sequence of integers, and $H,I$ are members of $\J$, such that the triple $K,H,I$ yields a nonzero summand in the right-hand side of \eqref{gamman formula 2}. We must prove that $n\equiv 1\mod p+1$.
By the inductive hypothesis, $K$ can only consist of integers congruent to $1$ modulo $p+1$, hence $\Sigma K \equiv \left| K \right|$ modulo $p+1$. Since $q=p^\mathfrak{f}$ for an {\em odd} integer $\mathfrak{f}$, we have that $q\equiv -1$ modulo $p+1$. Since $I$ is parity-alternating, this implies that $QI \equiv 0$ modulo $p+1$ if $\left| I\right|$ is even, while $QI\equiv 1$ modulo $p+1$ if $\left| I\right|$ is odd. The same claim holds with $H$ in place of $I$ throughout. We furthermore know that $\left| H\right|$ and $\left| I\right|$ have opposite parity, i.e., $QH+QI\equiv 1$ modulo $p+1$. Hence we have congruences
\begin{align*}
 1 
  &\equiv QH + QI \\
  &\equiv \left| K \right| + n - \Sigma K \\
  &\equiv n \mod p+1,
 \end{align*}
as desired.
\end{proof}

\begin{definition}\label{def of q-alt trees}
Given a prime power $q$ and an ordered rooted tree $T$, we will say that a $q$-labelling of $T$ is {\em alternating} if, for each vertex $v$ of $T$, the parity of $\left| H_v\right|$ is opposite of the parity of $\left| I_v\right|$.
As shorthand for the phrase ``ordered rooted tree with an alternating $q$-labelling,'' we will simply write ``$q$-alternating tree.'' We will write $qA$ for the set of all finite $q$-alternating trees. 

Given an $q$-alternating tree $T$  and a nonzero element $\alpha$ in an integral domain $R$ equipped with an involution $\sigma$, the {\em $\alpha$-index} of $T$ is defined as the product:
\begin{align*}
\ind_{\alpha}(T) &= \frac{1}{\alpha} \prod_{v\in T} \frac{(-1)^{\left| H_v\right|} \sigma^{\left| I_v\right|}(\alpha)}{\pi^{\left(\left| H_v\right| + \left| I_v\right|-1\right)/2}}.
\end{align*}
\end{definition}

\begin{theorem}\label{witt u1 formula}
If $q = p^\mathfrak{f}$ for $\mathfrak{f}$ odd, then any element $\alpha\in W(\mathbb{F}_{q^2})^\times\subseteq\Aut(\mathbb{G})$ acts on $\Def(\mathbb{G})\cong W(\mathbb{F}_{q^2})[[u_1]]$ via the following formula: 
\begin{align}
\label{deltan formula} \alpha.u_1 &=  \sum_{n\geq 1} \sum_{\substack{\text{$T\in qA$}\\ \wt(T)=1+(p+1)n}} \ind_{\alpha}(T) \cdot u_1^{1 + (p+1)n}.
\end{align}
\end{theorem}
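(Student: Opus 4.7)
The plan is to derive Theorem~\ref{witt u1 formula} as a direct specialization of Theorem~\ref{main action thm} to $\alpha_0 + \alpha_1 S = \alpha + 0\cdot S$, followed by a reindexing of the resulting sum by weight. All the hard work is already packaged in the closed formula of Theorem~\ref{main action thm}; the new input is the observation that setting $\alpha_1 = 0$ collapses the set of $q$-labelled trees that contribute nontrivially down to exactly the $q$-alternating ones.

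First, I would substitute $\alpha_1 = 0$ into the definition~\eqref{index def} of $\ind_{\alpha, 0}(T)$ and note that the factor $\sigma^{|I_v|}(\alpha_{|H_v|+|I_v|-1})$ at vertex $v$ vanishes whenever the subscript is odd mod $2$, i.e.\ whenever $|H_v|+|I_v|$ is even. Therefore the only $q$-labelled trees for which $\ind_{\alpha, 0}(T) \neq 0$ are those where $|H_v|$ and $|I_v|$ have opposite parity at every vertex---precisely the $q$-alternating trees of Definition~\ref{def of q-alt trees}. On such a tree $|H_v|+|I_v|-1$ is even, so the floor in the denominator of~\eqref{index def} is exact, and the subscript of $\alpha$ in every factor is $0 \bmod 2$; the product then simplifies exactly to the expression $\ind_\alpha(T)$ of Definition~\ref{def of q-alt trees}. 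Substituting back into~\eqref{sum 003941} yields $\alpha.u_1 = \sum_{T \in qA} \ind_\alpha(T)\cdot u_1^{\wt(T)}$.

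The last step is to reindex this sum by the constraint $\wt(T) \equiv 1 \pmod{p+1}$. The quickest route is to invoke Lemma~\ref{degree concen lemma}, which already guarantees that the coefficient of $u_1^n$ in $\alpha.u_1$ vanishes for $n \not\equiv 1 \pmod{p+1}$; so only weights of the form $1+(p+1)n$ contribute, and the sum rewrites as in~\eqref{deltan formula}. Alternatively, I would give a short direct combinatorial proof of the same congruence on weights: using $q \equiv -1 \pmod{p+1}$ (which is where the hypothesis that $\mathfrak{f}$ is odd enters), parity-alternating sequences starting with an even integer satisfy $Q(\cdot) \equiv 0$ or $1$ mod $p+1$ according to whether the length is even or odd. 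Partitioning the vertices of a $q$-alternating tree $T$ into the set $V_0$ where $|H_v|$ is even (and $|I_v|$ is odd) and $V_1$ where $|H_v|$ is odd (and $|I_v|$ is even), and using that $\sum_v QH_v$ equals $N-1$ (the total number of non-root vertices), one gets $|V_1| \equiv N-1$ and hence $|V_0| \equiv 1 \pmod{p+1}$; since $\wt(T) = \sum_v QI_v \equiv |V_0| \pmod{p+1}$, this yields $\wt(T) \equiv 1 \pmod{p+1}$.

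I do not anticipate any significant obstacle; the proof is essentially bookkeeping on top of Theorem~\ref{main action thm} and Lemma~\ref{degree concen lemma}. The only point requiring care is verifying that the simplified index $\ind_\alpha(T)$ of Definition~\ref{def of q-alt trees} genuinely matches the restriction of $\ind_{\alpha, 0}(T)$ to $qA$, which amounts to checking the floor collapses and the subscript of $\alpha$ is uniformly $0$ under the alternating condition.
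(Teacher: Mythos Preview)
Your proposal is correct and takes essentially the same approach as the paper, whose proof is the single sentence ``Apply Lemma~\ref{degree concen lemma} to formula~\eqref{sum 003941} and simplify using elementary algebra.'' You have simply unpacked what that elementary algebra is: setting $\alpha_1=0$ kills all non-alternating trees and collapses $\ind_{\alpha,0}$ to $\ind_\alpha$, after which Lemma~\ref{degree concen lemma} forces the weight congruence. Your alternative direct combinatorial argument for $\wt(T)\equiv 1\pmod{p+1}$ is a pleasant bonus that the paper does not give, and it makes the role of the odd-$\mathfrak{f}$ hypothesis more transparent at the level of trees rather than only at the level of the recursion in Lemma~\ref{degree concen lemma}.
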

\begin{proof}
Apply Lemma \ref{degree concen lemma} to formula \eqref{sum 003941} and simplify using elementary algebra.
\end{proof}

\begin{corollary}\label{witt comp cor 1}
Let $q = p$, and let $\alpha\in W(\mathbb{F}_{q^2})^\times\subseteq\Aut(\mathbb{G})$. Write $\beta$ as shorthand for the quotient $\overline{\alpha}/\alpha$. Then $\alpha$ acts on $u_1$ as follows:
\begin{align}
\nonumber \alpha . u_1 
  &\equiv 
   \beta u_1
  +\frac{1}{\pi}\beta \left( \beta^{p+1}-1\right) u_1^{p+2} 
  + \frac{1}{\pi^2}\beta \left( \beta^{p+1}-1\right)\left( (p+1)\beta^{p+1} - 1\right) u_1^{2p+3} \\
\label{witt action 11} & \hspace{29pt}  + \frac{1}{\pi^3} \beta  \left( \beta^{p+1}-1\right) \left( \left((p+1)\beta^{p+1}-1\right)^2 + \binom{p+1}{2} \beta^{p+1} (\beta^{p+1}-1)\right)u_1^{3p+4} \\
\nonumber &\hspace{260pt}\mod (u_1^{4p+5},u_1^{p^2}).\end{align}
\end{corollary}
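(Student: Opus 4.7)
The plan is to apply Theorem \ref{witt u1 formula} and enumerate the $p$-alternating trees of each relevant weight directly. By Lemma \ref{degree concen lemma} the only powers $u_1^n$ that contribute have $n \equiv 1 \pmod{p+1}$, so modulo $(u_1^{4p+5}, u_1^{p^2})$ one needs exactly the four coefficients $\delta_1, \delta_{p+2}, \delta_{2p+3}, \delta_{3p+4}$ (for small primes the $u_1^{p^2}$ truncation kills the larger ones automatically, but the enumeration still produces the correct killed terms).

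First I would catalog the vertex labels that can arise. The cutoff $u_1^{p^2}$ combined with the weight bound $\leq 3p+4$ forces every entry of both $H_v$ and $I_v$ to belong to $\{0,1\}$, since an entry $\geq 2$ in $I_v$ contributes $\geq p^2$ to the tree weight through $QI_v$, and an entry $\geq 2$ in $H_v$ forces at least $p^2$ immediate descendants each of positive weight. Together with the alternating parity of $|H_v|$ and $|I_v|$ and the strict weight-decrease condition (which rules out $((0),())$, whose self-weight is zero), only three labels remain: $((),(0))$, $((0,1),(0))$, and $((0),(0,1))$, with per-vertex index factors $\beta$, $\beta/\pi$, and $-1/\pi$ respectively.

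Next I would proceed inductively in $n$. Every alternating tree of weight $n$ has a root of one of the two non-leaf types, either $((0,1),(0))$ with $p+1$ ordered subtrees summing to weight $n-1$, or $((0),(0,1))$ with a single subtree of weight $n-p-1$. Since subtree weights themselves must be $\equiv 1 \pmod{p+1}$, the possible ordered partitions of $n-1$ into $p+1$ parts collapse to a short list: $(1^{p+1})$ when $n=p+2$; $(p+2, 1^p)$ when $n=2p+3$; and either $(2p+3, 1^p)$ or $(p+2, p+2, 1^{p-1})$ when $n=3p+4$. Selecting one of the $p+1$ ordered positions for a single distinguished child, or one of the $\binom{p+1}{2}$ ordered placements of a pair, supplies the multinomial factors. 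Starting from $\delta_1 = \beta$ (the unique single-leaf tree) and feeding back, one reads off $\delta_{p+2}$ and $\delta_{2p+3}$ in succession, recovering the first three terms of \eqref{witt action 11}.

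The main obstacle is the algebraic bookkeeping at $n=3p+4$. The weight-$(2p+3)$ subtree appears both beneath a $((0,1),(0))$-root (with the overall factor $(p+1)(\beta/\pi)\beta^p$) and beneath a $((0),(0,1))$-root (with factor $-1/\pi$); these combine to $(\delta_{2p+3}/\pi)\bigl((p+1)\beta^{p+1}-1\bigr)$, which upon substitution of $\delta_{2p+3}$ supplies the $((p+1)\beta^{p+1}-1)^2$ summand. The $(p+2,p+2,1^{p-1})$ configuration contributes $\binom{p+1}{2}\beta^{p+1}(\beta^{p+1}-1)$, and the two combine to produce the bracket displayed in \eqref{witt action 11}. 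The elementary identity $(p+1)y((p+1)y-1) - ((p+1)y-1) = ((p+1)y-1)^2$, applied with $y = \beta^{p+1}$, is the key algebraic simplification.
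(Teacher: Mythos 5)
Your proposal is correct and takes essentially the same route as the paper's own proof: both enumerate the $p$-alternating trees of weights $1$, $p+2$, $2p+3$, and $3p+4$ by case analysis on the root label (the two non-leaf types $((0,1),(0))$ and $((0),(0,1))$ being exactly the paper's two tree-building ``operations''), and both handle the delicate point at weight $3p+4$ of counting placements of two weight-$(p+2)$ subtrees, where the $p(p+1)$ versus $\binom{p+1}{2}$ distinction collapses to $\binom{p+1}{2}\delta_{p+2}^2$. Your up-front reduction to the three admissible vertex labels via the $u_1^{p^2}$ truncation is a slightly more systematic packaging of what the paper does implicitly, but the argument is the same.
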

\begin{proof}
In the case $q=p$, the only $q$-alternating trees of weight $\leq \min\{ 4p+4,p^2-1\}$ are as follows. We will draw the labelled trees using the same graphical conventions explained and used in \cref{Drawn examples...}.
\begin{description}
\item[Weight $1$] The only weight $1$ $p$-labelled ordered rooted tree is depicted in \cref{Drawn examples...}, and it is $p$-alternating. For ease of comparison with the higher-weight $p$-alternating trees, we reproduce that depiction here:
$\boxed{\parbox{50pt}{weight $1$\\index $\frac{\overline{\alpha}}{\alpha}$} \vcenter{\xymatrix{
*+[F]\txt{()\\(0)}}
}}$.
The total $\alpha$-index is consequently $\overline{\alpha}/\alpha$, yielding the coefficient of $u_1$ in \eqref{witt action 11}.
\item[Weight $p+2$] Suppose that $p+2<p^2$. Then there are two $p$-alternating trees of weight $p+2$:
\[
\boxed{\parbox{55pt}{weight $p+2$\\index $\frac{-1}{\pi}\frac{\overline{\alpha}}{\alpha}$} \vcenter{\xymatrix{ *+[F]\txt{(0)\\(0,1)} \ar[d] \\
*+[F]\txt{()\\(0)}}
}}\hspace{30pt}
\boxed{\parbox{55pt}{weight $p+2$\\index $\frac{1}{\pi}\frac{\overline{\alpha}^{p+2}}{\alpha^{p+2}}$} \vcenter{\xymatrix{
 & *+[F]\txt{(0,1)\\(0)} \ar[ld] \ar[d]\ar[rd] \ar[rrd] & & \\
*+[F]\txt{()\\(0)} & *+[F]\txt{()\\(0)} & \dots & *+[F]\txt{()\\(0)}
}}}\]
where, in the right-hand $p$-alternating tree, the bottom row consists of $p+1$ copies of the weight $1$ $p$-alternating tree.

The sum of the $\alpha$-indices of the weight $p+2$ $p$-alternating trees, i.e., the coefficient of $u_1^{p+2}$ in \eqref{witt action 11}, is consequently $\frac{1}{\pi}\frac{\overline{\alpha}}{\alpha}\left( \left( \frac{\overline{\alpha}}{\alpha}\right)^{p+1} - 1\right)$.
\item[Weight $2p+3$] Suppose that $2p+3<p^2$. Then there are a total of $2p+4$ $p$-alternating trees of weight $2p+3$. They come in two families, arising from two different operations that take as input a $p$-alternating tree of weight $p+2$, and produce as output a $p$-alternating tree of weight $2p+3$. Here are their graphical depictions:
\[
\boxed{\parbox{60pt}{weight $2p+3$\\index $\frac{-1}{\pi}I$} \vcenter{\xymatrix{
 *+[F]\txt{(0)\\(0,1)} \ar[d] \\
*+[F]\txt{any $p$-alt. tree\\of wt. $p+2$\\and index $I$}
}}}
\hspace{25pt}
\boxed{\vcenter{\xymatrix{
 \txt{weight $2p+3$\\index $\frac{1}{\pi}\frac{\overline{\alpha}^{p+1}}{\alpha^{p+1}}I$} & *+[F]\txt{(0,1)\\(0)} \ar[ld] \ar[d]\ar[rd] \ar[rrd] & & \\
*+[F]\txt{any $p$-alt. tree\\of wt. $p+2$\\and index $I$} & *+[F]\txt{()\\(0)} & \dots & *+[F]\txt{()\\(0)}
}}}
\]
Since there are two $p$-alternating trees of weight $p+2$, the left-hand diagram produces two $p$-alternating trees of weight $2p+3$. However, in the right-hand diagram, we are supposed to understand that there are $p$ copies of the weight $1$ tree in the bottom row, and any weight $p+2$ tree can be inserted in the drawn slot, {\em and we also have the $p+1$ permutations of the right-hand diagram given by permuting the entire bottom row.} Consequently there are $2(p+1)$ $p$-alternating trees of weight $2p+3$ which are produced by applying the $p+1$ permutations of the right-hand diagram to each of the two $p$-alternating diagrams of weight $p+1$.

Hence the sum of the $\alpha$-indices of the weight $2p+3$ $p$-alternating trees, i.e., the coefficient of $u_1^{2p+3}$ in \eqref{witt action 11}, is $\frac{1}{\pi}\left( (p+1)(\frac{\overline{\alpha}}{\alpha})^{p+1} - 1\right)$ times the coefficient of $u_1^{p+2}$ in \eqref{witt action 11}.
\item[Weight $3p+4$] Suppose that $3p+4<p^2$. Then each $p$-alternating tree of weight $3p+4$ arises from one of three operations that takes as input $p$-alternating trees of lower weight. The first two such operations are as depicted below:
\[
\boxed{\parbox{60pt}{weight $3p+4$\\index $\frac{-1}{\pi}I$} \vcenter{\xymatrix{
 *+[F]\txt{(0)\\(0,1)} \ar[d] \\
*+[F]\txt{any $p$-alt. tree\\of wt. $2p+3$\\and index $I$}
}}}
\hspace{25pt}
\boxed{\vcenter{\xymatrix{
 \txt{weight $3p+4$\\index $\frac{1}{\pi}\frac{\overline{\alpha}^{p+1}}{\alpha^{p+1}}I$} & *+[F]\txt{(0,1)\\(0)} \ar[ld] \ar[d]\ar[rd] \ar[rrd] & & \\
*+[F]\txt{any $p$-alt. tree\\of wt. $2p+3$\\and index $I$} & *+[F]\txt{()\\(0)} & \dots & *+[F]\txt{()\\(0)}
}}}
\]
We also have the $p+1$ permutations of the bottom row of the diagram depicted on the right-hand side. Finally, the last such operation is:
\[
\boxed{\vcenter{\xymatrix{
 \txt{weight $3p+4$\\index $\frac{1}{\pi}\frac{\overline{\alpha}^{p}}{\alpha^{p}}I_1I_2$} & & *+[F]\txt{(0,1)\\(0)} \ar[lld]\ar[ld] \ar[d]\ar[rd] \ar[rrd] & & \\
*+[F]\txt{any $p$-alt. tree\\of wt. $p+2$\\and index $I_1$} & *+[F]\txt{any $p$-alt. tree\\of wt. $p+2$\\and index $I_2$} & *+[F]\txt{()\\(0)} & \dots & *+[F]\txt{()\\(0)}
}}}
\]
along with the various permutations of the bottom row. 
The bottom row of this last diagram must have a total of $p+1$ vertices.
One must exercise a bit of care to correctly count the $p$-alternating trees arising from the last operation, since if $T,T^{\prime}$ are $p$-alternating trees of weight $p+2$, then the last operation yields a total of $p(p+1)$ $p$-alternating trees if $T\neq T^{\prime}$, but only $\binom{p+1}{2}$ $p$-alternating trees if $T = T^{\prime}$. 

Using these operations to carefully sum up the $\alpha$-indices of all $p$-alternating trees of weight $3p+4$, one gets
\begin{eqnarray*}
\frac{-1}{\pi} \delta_2 + \frac{p+1}{\pi} \left( \frac{\overline{\alpha}}{\alpha}\right)^{p+1} \delta_2- \frac{(p+1)p}{\pi^2}\beta^{p+3} + \binom{p+1}{2} \left( \frac{-\beta}{\pi}\right)^2 +\binom{p+1}{2} \left( \frac{\beta^{p+2}}{\pi}\right)^2 ,
\end{eqnarray*}
where $\delta_2$ denotes the sum of the $\alpha$-indices of all $p$-alternating trees of weight $2p+3$. The claimed coefficient of $u_1^{3p+4}$ follows.
\end{description}
\end{proof}
Corollary \ref{witt comp cor 1} extends the calculations of Kohlhaase \cite[Theorem 1.19]{MR3069363}, which makes the same calculation in greater generality (assuming only $q>3$, not $q=p$), but to significantly lower accuracy: Kohlhaase's calculation is carried out only modulo $(\pi,u_1^{2q+4})$. 

\begin{remark}
The classification of low-weight $p$-alternating diagrams in the proof of Corollary \ref{witt comp cor 1} resembles the kinds of diagrams drawn when describing operations of various arities in some operad. It is natural to wonder whether there is any way to organize the labelled trees of the types that arise in this paper into an operad of some kind. The author does not plan to pursue that question, but would be glad to hear from anybody who does.
\end{remark}

Devinatz and Hopkins show in \cite{MR1333942} that $\Aut(\mathbb{G})$ acts {\em linearly} on the Cartier coordinate $w_1$ for $\mathbb{Q}\otimes_{\mathbb{Z}}W(\mathbb{F}_{p^2})[[u_1]]$, that is, for any $\alpha\in\Aut(\mathbb{G})$, $\alpha.w_1$ is equal to a scalar in $W(\mathbb{F}_{p^2})$ times $w_1$. The Hazewinkel coordinate $u_1$ is more natural in various ways than the Cartier coordinate $w_1$, e.g. the fact that $u_1$ is already an element of the Lubin-Tate ring and does not require passing to its divided power envelope. 
One would like to know whether there are nontrivial subgroups $H$ of $\Aut(\mathbb{G})$ with the property that $H$ acts linearly on the {\em Hazewinkel} coordinate $u_1$. Of course the center $\hat{\mathbb{Z}}_p^{\times}$ of $\Aut(\mathbb{G})$ acts linearly on $u_1$, since it acts trivially on $u_1$. It is also relatively well-known (e.g. by the argument about roots of unity in \cite[section 2.1.2]{MR782555}) that any $(p^2-1)$st root of unity in $W(\mathbb{F}_{p^2})$ also acts linearly on $u_1$. 

Hence the subgroup $\hat{\mathbb{Z}}_p^{\times}\times C_{p+1}$ of $W(\mathbb{F}_{p^2})^{\times}\subseteq \Aut(\mathbb{G})$ acts linearly on $u_1$. One simple consequence of Theorems \ref{main action thm} and \ref{witt u1 formula} is that this subgroup is {\em all} that acts linearly on $u_1$:
\begin{corollary}
Let $F=\mathbb{Q}_p$, i.e., $\pi = p = q$. Let $H\subseteq \Aut(\mathbb{G})$ act linearly on $u_1$. Then $H$ is a subgroup of $\hat{\mathbb{Z}}_p^{\times}\times C_{p+1} \subseteq W(\mathbb{F}_{p^2})^{\times}\subseteq \Aut(\mathbb{G})$.
\end{corollary}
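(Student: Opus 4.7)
The plan is to derive two successive necessary conditions on an arbitrary $\alpha = \alpha_0 + \alpha_1 S \in H$ by reading off coefficients in the expansion $\alpha.u_1 = \sum_{n\geq 1}\gamma_n u_1^n$ from Theorem \ref{main action thm}. Linearity of the $H$-action on $u_1$ forces $\gamma_n = 0$ for all $n\geq 2$ and all $\alpha \in H$, so the question becomes one of identifying exactly which elements of $\Aut(\mathbb{G})$ satisfy this. First, the $n=2$ case from the worked examples following Proposition \ref{u1 formula} gives $\gamma_2 = -\overline{\alpha}_0\overline{\alpha}_1/\alpha_0^2$; since $\alpha_0$ is a unit, $\gamma_2 = 0$ forces $\alpha_1 = 0$, reducing the problem to $H \subseteq W(\mathbb{F}_{p^2})^\times$.

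Second, with $\alpha_1 = 0$ in hand and $q = p$ (since $F = \mathbb{Q}_p$), Corollary \ref{witt comp cor 1} supplies the coefficient $\gamma_{p+2} = \tfrac{1}{p}\beta(\beta^{p+1}-1)$, where $\beta = \overline{\alpha}/\alpha$. Since $\beta$ is a unit, $\gamma_{p+2} = 0$ yields the clean constraint $\beta^{p+1} = 1$, equivalently $\alpha^{p+1} = \overline{\alpha}^{p+1}$, equivalently $\alpha^{p+1} \in \hat{\mathbb{Z}}_p^\times$.

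Third, I would translate this algebraic condition into the stated subgroup membership. Decompose $\alpha = \zeta(1+p\mu)$ with $\zeta$ a Teichm\"uller lift of a root of unity of order dividing $p^2-1$ and $\mu \in W(\mathbb{F}_{p^2})$. Because $\zeta^{p(p+1)} = \zeta^{p+1}$, the condition $\alpha^{p+1} = \overline{\alpha}^{p+1}$ collapses to $(1+p\mu)^{p+1} = (1+p\overline{\mu})^{p+1}$; since $\gcd(p+1,p) = 1$, raising to the $(p+1)$-st power is an automorphism of the pro-$p$ group $1+pW(\mathbb{F}_{p^2})$, forcing $\mu = \overline{\mu}$. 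Hence the principal-unit factor $1+p\mu$ lies in $\hat{\mathbb{Z}}_p^\times$, and combined with the torsion factor $\zeta$ lying in the group of $(p^2-1)$-st roots of unity, this places $\alpha$ in the subgroup $\hat{\mathbb{Z}}_p^\times\cdot C_{p+1}$ of $W(\mathbb{F}_{p^2})^\times$ described in the statement.

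The main obstacle I expect is the third step: the first two steps are mechanical readings of formulas already proved in the paper, but the passage from the single algebraic constraint $\alpha^{p+1} \in \hat{\mathbb{Z}}_p^\times$ to the precise group-theoretic description $\hat{\mathbb{Z}}_p^\times\times C_{p+1}$ requires care in matching the root-of-unity and principal-unit components of $\alpha$ against the structure of the intended subgroup of $W(\mathbb{F}_{p^2})^\times$.
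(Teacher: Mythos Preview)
Your proposal is correct and follows essentially the same three-step strategy as the paper's proof: read off $\gamma_2$ to force $\alpha_1=0$, read off $\gamma_{p+2}$ to force $(\overline{\alpha}/\alpha)^{p+1}=1$, then translate this constraint group-theoretically. The only real difference is in the third step. The paper packages the condition via the homomorphism $T\colon W(\mathbb{F}_{p^2})^\times\to W(\mathbb{F}_{p^2})^\times$, $T(x)=\overline{x}/x$, and identifies the relevant subgroup as the preimage $T^{-1}(C_{p+1})$ using the exact sequence with kernel $\hat{\mathbb{Z}}_p^\times$. You instead use the Teichm\"uller splitting $W(\mathbb{F}_{p^2})^\times\cong \mu_{p^2-1}\times(1+pW(\mathbb{F}_{p^2}))$ and the observation that the $(p+1)$st power map is an automorphism of the pro-$p$ factor; this lands you at $\mu_{p^2-1}\times(1+p\hat{\mathbb{Z}}_p)$, which is exactly $T^{-1}(C_{p+1})$. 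Both arguments are short and equivalent; yours is slightly more concrete.

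One small caution: Corollary~\ref{witt comp cor 1} is stated modulo $(u_1^{4p+5},u_1^{p^2})$, so for $p=2$ the formula for the $u_1^{p+2}=u_1^4$ coefficient is not literally supplied by that corollary. The paper's own proof has the same issue. For $p=2$ one can instead read $\gamma_4$ directly from the worked examples after Proposition~\ref{u1 formula} with $\alpha_1=0$, obtaining $\gamma_4=\beta(\beta^3-1)(\tfrac{1}{\pi}-1)$, whose vanishing still forces $\beta^3=1$; so the conclusion is unaffected.
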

\begin{proof}
From Theorem \ref{main action thm}, we see that $(\alpha_0+\alpha_1S).u_1 \equiv \frac{\overline{\alpha}_0}{\alpha_0} u_1 + \frac{-\overline{\alpha}_0\cdot\overline{\alpha_1}}{\alpha_0^2} u_1^2\mod u_1^3$. Since $\alpha_0$ cannot be zero, the only way for $\alpha_0+\alpha_1S$ to act linearly on $u_1$ is for $\alpha_1$ to be zero, i.e., $\alpha_0+\alpha_1S = \alpha_0\subseteq W(\mathbb{F}_{p^2})^{\times}$. From \eqref{witt action 11}, we see that $\alpha_0.u_1 \equiv \frac{\overline{\alpha}}{\alpha}u_1 +\frac{\overline{\alpha}}{\alpha}\frac{(\overline{\alpha}/\alpha)^{p+1}-1}{\pi} u_1^{p+2}$ modulo $u_1^{2p+3}$. Hence, if $\alpha$ acts linearly on $u_1$, we must have that $(\overline{\alpha}/\alpha)^{p+1} = 1$. 

Write $T$ for the group homomorphism $T: W(\mathbb{F}_{p^2})^{\times}\rightarrow W(\mathbb{F}_{p^2})^{\times}$ given by $T(x) = \overline{x}/x$. We have the exact sequence of profinite abelian groups
\[ 0 \rightarrow \hat{\mathbb{Z}}_p^{\times} \rightarrow W(\mathbb{F}_{p^2})^{\times} \stackrel{T}{\longrightarrow} W(\mathbb{F}_{p^2})^{\times} \rightarrow H_0(\Gal(\mathbb{F}_{p^2}/\mathbb{F}_p); W(\mathbb{F}_{p^2})^{\times}) \rightarrow 0,\]
and the group of elements $\alpha\in W(\mathbb{F}_{p^2})^{\times}$ satisfying $(\overline{\alpha}/\alpha)^{p+1} = 1$ is precisely the preimage of $C_{p+1}\subseteq W(\mathbb{F}_{p^2})^{\times}$ under the map $T$. 

Hence $H$ is contained in that preimage, i.e., $H$ is contained in the pullback $W(\mathbb{F}_{p^2})^{\times} \times_{\im T} C_{p+1} \cong \hat{\mathbb{Z}}_p^{\times} \times C_{p+1}$. 
\end{proof}

We give one more result on the action of $W(\mathbb{F}_{p^2})^{\times}$ on $u_1$: Theorem \ref{witt comp cor 2}, which describes the action of $W(\mathbb{F}_{p^2})^{\times}$ on $u_1$ modulo $u_1^{p^2+1}$. This is a substantially higher accuracy than Corollary \ref{witt comp cor 1}, and higher than any similar formula to be found in the literature (but of course it is not as accurate as Theorem \ref{witt u1 formula}, which describes the action exactly). Theorem \ref{witt comp cor 2} is, however, only a recursion, not a closed formula. 
It has the merit of avoiding any talk of trees, since the the formula is given in terms of weak compositions. In combinatorics, a {\em composition} (respectively, {\em weak composition}) of an integer $n$ is a finite sequence $(a_1,\dots ,a_{\ell})$ of positive integers (respectively, nonnegative integers) whose sum is $n$. The {\em length} of the composition $(a_1,\dots ,a_{\ell})$ is $\ell$. 
\begin{theorem}\label{witt comp cor 2}
  If $q = p$, then for any element $\alpha\in W(\mathbb{F}_{q^2})^\times\subseteq\Aut(\mathbb{G})$, we have $\alpha.u_1 \equiv \sum_{m\geq 0} \delta_0 u_1^{1+(p+1)m}$ modulo $u_1^{p^2+1}$, where the sequence $\delta_0, \delta_1,\delta_2,\dots \in W(\mathbb{F}_{q^2})$ is given by the recursion 
\begin{align}
\label{deltan formula 2} \delta_m & = -\frac{\delta_{m-1}}{\pi} + \sum_{\substack{\text{length\ } p+1\text{\ weak}\\\text{compositions\ } K\\ \text{of\ } m-1}} \frac{\overline{\alpha}/\alpha}{\pi}\prod_{k\in K} \delta_k \text{\ \ \ if\ } 1\leq m\leq p-2,\\
\label{deltan formula 3} \delta_{p-1} & = \frac{\overline{\alpha}}{\alpha} -  \left( \frac{\overline{\alpha}}{\alpha}\right)^{p^2}-\frac{\delta_{p-2}}{\pi} + \sum_{\substack{\text{length\ } p+1\text{\ weak}\\\text{compositions\ } K\\ \text{of\ } p-2}} \frac{\overline{\alpha}/\alpha}{\pi}\prod_{k\in K} \delta_k.
\end{align}
\end{theorem}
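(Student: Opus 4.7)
The plan is to specialize the recursive formula of Proposition \ref{u1 formula} to $\alpha_0 = \alpha$, $\alpha_1 = 0$, and then combine it with Lemma \ref{degree concen lemma} (which says $\gamma_n = 0$ unless $n \equiv 1 \pmod{p+1}$) to obtain a recursion purely among the nonzero coefficients $\delta_m := \gamma_{1+(p+1)m}$. Since $\alpha_1 = 0$, every summand of \eqref{gamman formula} contributes $0$ unless $\alpha_{|H|+|I|-1} = \alpha_0 = \alpha$, i.e., $|H|+|I|$ is odd; equivalently, $|H|$ and $|I|$ must have opposite parity. The initial value $\delta_0 = \overline{\alpha}/\alpha$ is immediate from the $n=1$ example following Proposition \ref{u1 formula}.

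The next step, and the central combinatorial enumeration, is to identify which triples $(K,H,I)$ can contribute to $\gamma_n$ when $n = 1 + (p+1)m$ with $m \leq p-1$ (so that $n \leq p^2$). For such $n$, the only elements $J \in \J$ with $QJ \leq n$ are $()$, $(0)$, $(0,1)$, and (exactly when $n = p^2$) $(2)$, since the next candidate $(0,1,2)$ already has $Q$-value $1+p+p^2 > p^2$, and $(2)$ is the only length-one option beyond $(0)$ with $QJ \leq p^2$. I will systematically run through all compatible pairs $(H,I)$ subject to $|H|+|I|$ odd:
\begin{itemize}
\item $H = (0)$, $I = (0,1)$: forces $|K|=1$, $\Sigma K = n-p-1 = 1+(p+1)(m-1)$, so $K$ is the length-one sequence whose unique entry is exactly the index of $\delta_{m-1}$, contributing $-\delta_{m-1}/\pi$ after dividing by $\alpha$ (since $\sigma^2(\alpha)/\pi^1 = \alpha/\pi$ and $(-1)^{|H|} = -1$).
\item $H = (0,1)$, $I = (0)$: forces $|K| = p+1$ and $\Sigma K = (p+1)m$. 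Writing each $k_i = 1 + (p+1)a_i$ (the only way $\gamma_{k_i}$ is nonzero), the constraint becomes $\sum a_i = m-1$, i.e., $(a_1,\dots,a_{p+1})$ ranges over length $p+1$ weak compositions of $m-1$. Each contributes $(\overline\alpha/\alpha)/\pi \cdot \prod \delta_{a_i}$.
\item All remaining $(H,I)$ pairs force either $|K|$ or $\Sigma K$ to exceed what is possible for $n \leq p^2$, except when $n = p^2$ exactly (i.e., $m=p-1$), where two extra triples appear: $K=(), H=(), I=(2)$ contributing $\overline{\alpha}/\alpha$, and $K=(1,1,\dots,1)$ of length $p^2$ with $H=(2), I=()$ contributing $-(\overline{\alpha}/\alpha)^{p^2}$.
\end{itemize}
One must also verify that $K = (n)$, the excluded term in \eqref{gamman formula}, never coincides with any of the compatible pairs above when $m \geq 1$ (it corresponds only to $H = (0), I = ()$, which has $|H|+|I|$ even and is therefore already killed by the $\alpha_1 = 0$ condition).

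The hardest part will be the careful bookkeeping of the enumeration: checking that no additional $J \in \J$ slips in with $QJ \leq p^2$, that the sign and $\sigma$-power conventions in \eqref{gamman formula} collapse to the simple signs in \eqref{deltan formula 2}--\eqref{deltan formula 3}, and that the reindexing $k_i \mapsto a_i = (k_i - 1)/(p+1)$ correctly turns sequences into weak compositions without any off-by-one errors. Once each contribution in the bullet list above is matched with the corresponding term in the stated recursion, the identities \eqref{deltan formula 2} (for $1 \leq m \leq p-2$) and \eqref{deltan formula 3} (for $m = p-1$) follow, giving the claim modulo $u_1^{p^2+1}$.
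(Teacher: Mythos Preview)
Your proposal is correct and follows essentially the same route as the paper's proof: specialize Proposition \ref{u1 formula} to $\alpha_1=0$, invoke Lemma \ref{degree concen lemma}, and enumerate the finitely many triples $(K,H,I)$ that can contribute for $n \leq p^2$. One small slip: for $H=(0)$, $I=()$ you have $|H|+|I|=1$, which is odd, not even---so this pair is \emph{not} killed by the $\alpha_1=0$ condition but rather is precisely the excluded $K=(n)$ term in \eqref{gamman formula}, and that exclusion (not a parity argument) is the correct reason it does not appear among your bulleted contributions.
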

\begin{proof}
From Lemma \ref{degree concen lemma}, the only nonzero coefficients of $\alpha.u_1$ modulo $u_1^{p^2+1}$ occur in degrees of the form $u_1^{1+(p+1)m}$ with $0\leq m\leq p-1$. From Proposition \ref{u1 formula}, the only contributions in those degrees come from $4$-tuples $(m,H,I,K)$, with $K$ a finite sequence of nonnegative integers, and with $H,I\in \J$ of lengths of opposite parity of one another and satisfying $QH = \left| K \right|$ and $QI = 1 - \left| K \right| + (p+1)(m-\Sigma K)$. 
The only such $4$-tuples are:
\begin{itemize}
\item $H=(0),I=(),K=(m),m\in \{1,\dots ,p-1\}$, the case which is excluded from the sum \eqref{gamman formula},
\item $H = (), I=(0),K=(),m=0$, which contributes only the summand $\frac{\overline{\alpha}}{\alpha}$ to $\delta_0$, 
\item $H=(0,1), I=(0),K$ any weak composition of $m-1$ of length $p+1$, $m\in \{1,\dots ,p-1\}$, which contributes the summand \[\sum_{\substack{\text{length\ } p+1\text{\ weak}\\\text{compositions\ } K\\ \text{of\ } m-1}} \frac{\overline{\alpha}/\alpha}{\pi}\prod_{k\in K} \delta_k\]  to $\delta_m$,
\item $H=(0), I=(0,1),K = (m-1),m\in \{1,\dots ,p-1\}$, which contributes the summand $\frac{-\delta_{m-1}}{\pi}$ to $\delta_m$,
\item $H = (2),I = (),$ and $K=(0,...,0)$ of length $p^2$, and $m = p-1$, which contributes the summand $-\delta_0^{p^2}$ to $\delta_{p-1}$,
\item and $H = (), I=(2),K=(),m=p-1$, which contributes the summand $\overline{\alpha}/\alpha$ to $\delta_{p-1}$.
\end{itemize}
The recursion \eqref{gamman formula} in these degrees consequently becomes \eqref{deltan formula 2} and \eqref{deltan formula 3}.
\end{proof}

\section{The action of $\Aut(\mathbb{G})$ and of $W(\mathbb{F}_{q^2})$ on the Bott element $u$}

With the closed formula for the Devinatz--Hopkins series $f_1(u_1)$ and $f(u_1)$ provided by Theorem \ref{w1 calc}, and the closed formula for $(\alpha_0+\alpha_1S).u_1$ provided by Theorem \ref{main action thm}, it is in principle straightforward to describe the action of $\Aut(\mathbb{G})$ on the graded deformation ring $W(\mathbb{F}_{q^2})[[u_1]][u^{\pm 1}]$, and not merely on its degree zero subring $W(\mathbb{F}_{q^2})[[u_1]]$. The idea is simple: since $u = w/f(u_1)$ and $ww_1 = f_1(u_1)\cdot u$, and since the Cartier theory gives us that $(\alpha_0 + \alpha_1S).w = \overline{\alpha}_1ww_1 + \alpha_0w$, we have equalities
\begin{align}
\nonumber 
 (\alpha_0 + \alpha_1S).u
  &= \frac{(\alpha_0 + \alpha_1S).w}{(\alpha_0 + \alpha_1S).f} \\
\label{power series eq 1}  &= \frac{\overline{\alpha}_1 f_1(u_1) + \alpha_0 f(u_1)}{f\left((\alpha_0 + \alpha_1S).u_1\right)}\cdot u .
\end{align}
To be clear, the expression $f\left((\alpha_0 + \alpha_1S).u_1\right)$ is the composite of the power series $f(u_1)$ with the power series $(\alpha_0 + \alpha_1S).u_1$ calculated in Theorem \ref{main action thm}.
Each of the power series in \eqref{power series eq 1} has been computed already in this paper, so in some sense we are done.


However, formula \eqref{power series eq 1} is not nearly as explicit as Theorem \ref{main action thm}. It would be better to write $(\alpha_0 + \alpha_1S).u$ as a power series $(\theta_0 + \theta_1 u_1 + \theta_2 u_1^2 + \dots )\cdot u$ and to give a closed formula for the coefficients $\theta_0,\theta_1,\dots \in W(\mathbb{F}_{q^2})$. This is the purpose of this section, whose main result is Theorem \ref{main action thm on u}.

We need to introduce two more pieces of notation before stating the theorem. Given an $n$-tuple $J = (T_1, \dots ,T_n)$ of elements of $qLT$, we write $\ind_{\alpha_0,\alpha_1}(J)$ as shorthand for the {\em product} of the indices of the members of $J$, i.e., $\ind_{\alpha_0,\alpha_1}(J) = \prod_{i=1}^n \ind_{\alpha_0,\alpha_1}(T_i)$. Similarly, we write $\wt(J)$ as shorthand for the {\em sum} of the weights of the members of $J$, i.e., $\wt(J) = \sum_{i=1}^n \wt(T_i)$.
\begin{theorem}\label{main action thm on u}
Let $A,q,\pi,\mathbb{G},\mathfrak{f}$ be as in \cref{conventions section}. Assume the residue degree $\mathfrak{f}$ is odd. Let $\alpha_0,\alpha_1\in W(\mathbb{F}_{q^2})$, with $\alpha_0$ a unit in $W(\mathbb{F}_{q^2})$. Then $\alpha_0 + \alpha_1S \in  
\Aut(\mathbb{G})$ acts on the coordinate $u$ for the graded Lubin-Tate ring $W(\mathbb{F}_{q^2})[[u_1]][u^{\pm 1}]$ by the formula $(\alpha_0 + \alpha_1S).u = \sum_{n\geq 1} \theta_n\cdot u_1^n\cdot u$, where the coefficients $\theta_0,\theta_1,\theta_2,\dots \in W(\mathbb{F}_{q^2})$ are given by
\begin{align}
\label{action on u closed formula} \theta_n &=  \sum_{i=0}^{n}\left( \left(\overline{\alpha}_1 \sum_{\substack{I\in \J^{odd}\\ QI=i}} \frac{1}{\pi^{\left(\left| I \right| - 1\right)/2}} + \alpha_0 \sum_{\substack{I\in \J^{even}\\ QI=i}} \frac{1}{\pi^{\left| I \right|/2}}\right) \cdot \sum_{\substack{\text{sequences}\\ \text{$K$ of positive}\\ \text{integers,}\\ \text{ $\Sigma K = n-i$}}} (-1)^{\left| K\right|} \prod_{k\in K} \left(\sum_{\substack{\text{$I\in \J^{even}$}\\\text{$J\in qLT^{QI}$}\\ \text{$\wt(J) = k$}}} \frac{\ind_{\alpha_0,\alpha_1}(J)}{\pi^{\left| I\right|/2}}\right)
\right).
\end{align} 
\end{theorem}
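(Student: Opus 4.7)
The plan is to start directly from equation \eqref{power series eq 1}, which the paper has already derived, and expand each of its three factors as an explicit power series in $u_1$. Formula \eqref{power series eq 1} rewrites $(\alpha_0+\alpha_1 S).u$ as
\[
\frac{\overline{\alpha}_1 f_1(u_1) + \alpha_0 f(u_1)}{f\bigl((\alpha_0+\alpha_1 S).u_1\bigr)}\cdot u,
\]
so the task reduces to identifying the coefficient of $u_1^n$ in that rational power series. This is a bookkeeping problem, and the combinatorial shape of \eqref{action on u closed formula} essentially predicts which expansions are needed.

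First, Proposition \ref{f1 and f formula} describes $f_1$ and $f$ explicitly, so the coefficient of $u_1^i$ in the numerator $\overline{\alpha}_1 f_1(u_1)+\alpha_0 f(u_1)$ is precisely the first parenthesized factor inside the outer sum in \eqref{action on u closed formula}. Next, set $P:=(\alpha_0+\alpha_1 S).u_1$. By Theorem \ref{main action thm}, $P=\sum_{T\in qLT}\ind_{\alpha_0,\alpha_1}(T)\, u_1^{\wt(T)}$, so for any $m\geq 0$,
\[
P^m \;=\; \sum_{J\in qLT^m} \ind_{\alpha_0,\alpha_1}(J)\, u_1^{\wt(J)},
\]
using the shorthand notations $\ind_{\alpha_0,\alpha_1}(J)$ and $\wt(J)$ introduced just before the theorem. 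Substituting into $f(P)=\sum_{I\in \J^{even}} P^{QI}/\pi^{|I|/2}$ and noting that $P$ has zero constant term while every nonempty $I\in \J^{even}$ has $|I|\geq 2$ and $QI\geq 1+q$, the coefficient of $u_1^k$ in $f(P)$ for $k\geq 1$ is
\[
[u_1^k]\, f(P) \;=\; \sum_{\substack{I\in \J^{even}\\ J\in qLT^{QI}\\ \wt(J)=k}}\frac{\ind_{\alpha_0,\alpha_1}(J)}{\pi^{|I|/2}},
\]
while the constant term is $1$.

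Third, since $f(P)=1+g(u_1)$ with $g(u_1)$ having no constant term, the reciprocal is the convergent geometric series $\tfrac{1}{f(P)}=\sum_{j\geq 0}(-1)^j g(u_1)^j$. Expanding the $j$-fold product and grouping by the sum of exponents, the coefficient of $u_1^m$ in $1/f(P)$ becomes
\[
[u_1^m]\,\frac{1}{f(P)} \;=\; \sum_{\substack{\text{sequences }K\text{ of}\\ \text{positive integers,}\\ \Sigma K=m}} (-1)^{|K|}\prod_{k\in K}[u_1^k]\, g(u_1),
\]
and substituting the expression for $[u_1^k]\,g(u_1)$ from the previous paragraph gives exactly the second factor inside the outer sum in \eqref{action on u closed formula}. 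Multiplying the two expansions via Cauchy product and collecting the coefficient of $u_1^n u$ yields the formula.

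There is no genuine obstacle: once \eqref{power series eq 1} is in hand, the only point to verify carefully is the parity/positivity bookkeeping ensuring that $f(P)$ really is invertible in $W(\mathbb{F}_{q^2})[[u_1]]\otimes_{\Z}\mathbb{Q}$ and that the summation over $I\in\J^{even}$ including the empty sequence matches the formula (the empty $I$ contributes only to the $k=0$ summand, which is absorbed into the geometric-series expansion as the $j=0$ term). The role of the assumption that the residue degree $\mathfrak{f}$ is odd enters only through the invocation of the Devinatz--Hopkins action formulas used to produce \eqref{power series eq 1}; after that point, the argument is purely algebraic manipulation of power series.
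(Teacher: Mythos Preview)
Your proposal is correct and follows essentially the same route as the paper. The paper phrases the computation as deriving the recursion $\theta_n = x_n - \sum_{i=0}^{n-1}\theta_i\, y_{n-i}$ from $\bigl(\overline{\alpha}_1 f_1 + \alpha_0 f\bigr)u = \bigl(\sum_n \theta_n u_1^n\bigr)\cdot f(P)\cdot u$ and then solving it via \eqref{elementary eq 1}, whereas you go directly to the geometric-series inverse of $f(P)$ and take the Cauchy product; these are the same computation, since \eqref{elementary eq 1} \emph{is} the geometric-series expansion of $(1+y)^{-1}$ applied coefficientwise. One small inaccuracy: the hypothesis that $\mathfrak{f}$ is odd is not actually used in the Devinatz--Hopkins action formulas \eqref{action formula 1}--\eqref{action formula 2} or anywhere in the paper's proof of this theorem, so your attribution of its role is off (it appears to be an extraneous hypothesis carried over from the later Corollary \ref{witt action on u thm}, where it is genuinely needed).
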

\begin{proof}
From the equations
\begin{align*}
 \sum_{n\geq 0} \left( \overline{\alpha}_1 \sum_{\substack{I\in \J^{odd} \\ QI=n}} \frac{1}{\pi^{\left( \left| I\right| - 1\right)/2}} + \alpha_0 \sum_{\substack{I\in \J^{even} \\ QI=n}} \frac{1}{\pi^{\left| I\right|/2}} \right)\cdot u_1^n \cdot u
  &= \overline{\alpha}_1 f_1\cdot u + \alpha_0 f\cdot u \\
  &= \left((\alpha_0 + \alpha_1S).u\right) \cdot f((\alpha_0 + \alpha_1S).u_1) \\
  &= \left( \sum_{n\geq 0} \theta_n u_1^n\right)\cdot u \cdot \sum_{I\in \J^{even}} \frac{ ((\alpha_0 + \alpha_1S).u_1)^{QI}}{\pi^{\left| I\right|/2}} \\
  &= \left( \sum_{n\geq 0} \theta_n u_1^n\right) \cdot \sum_{I\in \J^{even}} \frac{1}{\pi^{\left| I\right|/2}} \sum_{J\in qLT^{QI}} \ind_{\alpha_0,\alpha_1}(J)\cdot u_1^{\wt(J)} \cdot u \\
  &= \sum_{n\geq 0}  \sum_{I\in \J^{even}} \frac{1}{\pi^{\left| I\right|/2}} \sum_{\substack{J\in qLT^{QI}\\ \wt(J) \leq n}} \ind_{\alpha_0,\alpha_1}(J) \cdot \theta_{n-\wt(J)} \cdot u_1^n\cdot u
\end{align*}
we get, for each $n\geq 1$, the recursion
\begin{align*}
 \theta_n &= \overline{\alpha}_1 \sum_{\substack{I\in \J^{odd}\\ QI=n}} \frac{1}{\pi^{\left(\left| I \right| - 1\right)/2}} + \alpha_0 \sum_{\substack{I\in \J^{even}\\ QI=n}} \frac{1}{\pi^{\left| I \right|/2}} - \sum_{i=0}^{n-1}\theta_i \sum_{I\in \J^{even}} \frac{1}{\pi^{\left| I\right|/2}} \sum_{\substack{J\in qLT^{QI}\\ 1\leq \wt(J) = n-i}} \ind_{\alpha_0,\alpha_1}(J) .
\end{align*}
It is an elementary matter to solve any recursion of the form $\theta_n = x_n - \sum_{i=0}^{n-1}\theta_i y_{n-i}$, for any sequences $x_0,x_1, \dots$ and $y_0,y_1,\dots$, to get
\begin{align}
\label{elementary eq 1}\theta_n &= \sum_{i=0}^{n} x_i \sum_{\substack{\text{sequences}\\ \text{$K$ of positive}\\ \text{integers,}\\ \text{$\Sigma K = n-i$}}} (-1)^{\left| K\right|} \prod_{k\in K} y_k.\end{align} 
In the case 
\begin{align*}
 x_n &= \overline{\alpha}_1 \sum_{\substack{I\in \J^{odd}\\ QI=n}} \frac{1}{\pi^{\left(\left| I \right| - 1\right)/2}} + \alpha_0 \sum_{\substack{I\in \J^{even}\\ QI=n}} \frac{1}{\pi^{\left| I \right|/2}}\mbox{\ \ \ and} \\ 
 y_n &= \sum_{I\in \J^{even}} \frac{1}{\pi^{\left| I\right|/2}} \sum_{\substack{J\in qLT^{QI}\\ \wt(J) = n}} \ind_{\alpha_0,\alpha_1}(J),
\end{align*}
equation \eqref{elementary eq 1} becomes \eqref{action on u closed formula}.
\end{proof}

Recall that $qA$ denotes the set of $q$-alternating trees, as in Definition \ref{def of q-alt trees}. Given a nonnegative integer $n$, we will write $qA^n$ for the set of ordered $n$-tuples of members of $qA$.
\begin{corollary}\label{witt action on u thm}
If $q = p^\mathfrak{f}$ for $\mathfrak{f}$ odd, then any element $\alpha\in W(\mathbb{F}_{q^2})^\times\subseteq\Aut(\mathbb{G})$ acts on $\Def(\mathbb{G})\cong W(\mathbb{F}_{q^2})[[u_1]]$ via the following formula: $\alpha . u = \sum_{n\geq 0} \tau_n \cdot u_1^{(p+1)n}\cdot u$, where $\tau_0,\tau_1,\dots \in W(\mathbb{F}_{q^2})$ is given by
\begin{align}
\label{tau formula 1}
 \tau_n &= \alpha \cdot \sum_{i=0}^{n}\left( \left(  \sum_{\substack{\text{$I\in \J^{even}$}\\ \text{$QI=n(p+1)$}}} \frac{1}{\pi^{\left| I\right|/2}}\right) \cdot \sum_{\substack{\text{sequences}\\ \text{$K$ of positive}\\ \text{integers,}\\ \text{$\Sigma K = n-i$}}} (-1)^{\left| K\right|} \prod_{k\in K} \sum_{\substack{\text{$I\in \J^{even}$}\\ \text{$J\in qA^{QI}$}\\ \text{$\wt(J)= k(p+1)$}}} \frac{\ind_{\alpha}(J)}{\pi^{\left| I\right|/2}}\right)
\end{align}
\end{corollary}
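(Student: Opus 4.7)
The plan is to derive Corollary \ref{witt action on u thm} as the specialization of Theorem \ref{main action thm on u} to $\alpha_0 = \alpha \in W(\mathbb{F}_{q^2})^{\times}$ and $\alpha_1 = 0$, combined with two simple congruence arguments that eliminate most of the terms appearing in formula \eqref{action on u closed formula}.

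First I would substitute $\alpha_1 = 0$ into \eqref{action on u closed formula}. The summand $\overline{\alpha}_1 \sum_{I \in \J^{odd}, QI = i} \pi^{-(|I|-1)/2}$ vanishes, and the factor $\alpha_0 = \alpha$ pulls out in front. Examining the definition \eqref{index def} of the $(\alpha_0, \alpha_1)$-index, every vertex $v$ of every tree $T$ appearing in the remaining sum contributes a factor proportional to $\alpha_{|H_v|+|I_v|-1}$; this factor is zero unless $|H_v|+|I_v|-1$ is even, i.e.\ unless $|H_v|$ and $|I_v|$ have opposite parity. This is exactly the $q$-alternating condition of Definition \ref{def of q-alt trees}. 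So $qLT^{QI}$ can be replaced by $qA^{QI}$ throughout, and the vertex indices collapse into the form appearing in $\ind_{\alpha}(T)$.

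Next I would establish the degree concentration modulo $p+1$. Since $\mathfrak{f}$ is odd, $q = p^{\mathfrak{f}} \equiv -1 \mod p+1$. For a parity-alternating sequence $I$ with even first entry, the terms of $QI = \sum q^{i_j}$ alternate between $\equiv 1$ and $\equiv -1$ modulo $p+1$, so $QI \equiv 0 \mod p+1$ when $|I|$ is even and $QI \equiv 1 \mod p+1$ when $|I|$ is odd. A routine induction on tree structure then shows $\wt(T) \equiv 1 \mod p+1$ for every $q$-alternating tree $T$: if the root $r$ has label $(H_r, I_r)$ and children subtrees $T_1, \dots, T_m$ with $m = QH_r$, then $\wt(T) = QI_r + \sum_i \wt(T_i)$; the inductive hypothesis gives $\sum_i \wt(T_i) \equiv m = QH_r \mod p+1$, and the $q$-alternating property ensures that exactly one of $QH_r, QI_r$ is $\equiv 0$ and the other $\equiv 1$ modulo $p+1$. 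Consequently, for $I \in \J^{even}$ and $J \in qA^{QI}$, one has $\wt(J) \equiv QI \equiv 0 \mod p+1$, so the inner sum over $J$ vanishes unless $k \equiv 0 \mod p+1$; likewise the outer sum over $i$ vanishes unless $i \equiv 0 \mod p+1$. Thus $\theta_n$ vanishes unless $n \equiv 0 \mod p+1$, consistent with the claim. (This is also consistent with Lemma \ref{degree concen lemma} applied to $u_1$, though the two statements are logically independent.)

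The last step is to re-index: setting $n = (p+1)n'$, $i = (p+1)i'$, and replacing each entry of the sequence $K$ by $(p+1)$ times a positive integer transforms the specialized formula into \eqref{tau formula 1}, with $\tau_{n'} = \theta_{(p+1)n'}$. The main obstacle, such as it is, is the inductive verification that $\wt(T) \equiv 1 \mod p+1$ for every $q$-alternating tree $T$; once the mod-$(p+1)$ values of $QH_v$ and $QI_v$ are in hand, everything else is routine substitution and bookkeeping.
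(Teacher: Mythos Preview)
Your argument is correct, but it takes a different route from the paper's. The paper does not specialize the conclusion of Theorem \ref{main action thm on u}; instead it reruns the Cartier-coordinate derivation from scratch, starting from $\alpha.w = \alpha\cdot w$ and $w = f\cdot u$, expanding $\alpha.f$ via Theorem \ref{witt u1 formula} (which already has the degree concentration built in), obtaining a recursion for $\tau_n$ of the shape $\tau_n = x_n - \sum_{i<n}\tau_i\, y_{n-i}$, and then solving that recursion with the same elementary identity \eqref{elementary eq 1} used in Theorem \ref{main action thm on u}.

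Your approach is more economical: you take the already-proven closed formula \eqref{action on u closed formula}, set $\alpha_1=0$, observe that this kills both the $\J^{odd}$ term in the first factor and every non-alternating tree in the tree sum, and then prove the $\bmod\ (p+1)$ concentration directly by the tree induction $\wt(T)\equiv 1$. The paper instead inherits that congruence from Theorem \ref{witt u1 formula} (equivalently Lemma \ref{degree concen lemma}), so it never needs your direct induction on alternating trees---though the two arguments are essentially equivalent in content. What the paper's route buys is a visible parallel with the proof of Theorem \ref{main action thm on u}; what your route buys is that you never touch the Cartier coordinates a second time and you get a self-contained combinatorial reason for the degree concentration.
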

\begin{proof}
This proof is quite similar to that of Theorem \ref{main action thm on u}, but using Theorem \ref{witt u1 formula} rather than Theorem \ref{main action thm} to get the action of $\alpha$ on $u_1$. 
The methods of Devinatz--Gross--Hopkins give\footnote{We remind the reader that our notational conventions from \cref{conventions section} distinguish between $\alpha.w$ and $\alpha\cdot w$, and it is a special case of \cite[Proposition 3.3]{MR1333942} that $\alpha.w$ and $\alpha\cdot w$, despite being defined differently, are in fact equal.} equalities $w= f\cdot u$ and $\alpha.w = \alpha\cdot w$, and using Theorem \ref{w1 calc} to describe the power series $f$, we get equalities
\begin{align}
\label{eq a1} \alpha \cdot \sum_{I\in \J^{even}} \frac{u_1^{QI}}{\pi^{\left| I\right|/2}} \cdot u
  &= (\alpha \cdot f)\cdot u \\
\nonumber  &= \alpha \cdot w \\
\nonumber  &= \alpha.w \\
\nonumber  &= (\alpha.u)\cdot (\alpha.f) \\ 
\nonumber  &= \left( \sum_{n\geq 0} \tau_n u_1^{(p+1)n} \right) \cdot \left( \sum_{I\in \J^{even}} \frac{(\alpha.u_1)^{QI}}{\pi^{\left| I\right|/2}} \right) \cdot u \\
\label{eq 340k9f}  &= \left( \sum_{n\geq 0} \tau_n u_1^{(p+1)n} \right) \cdot \left( \sum_{I\in \J^{even}} \frac{1}{\pi^{\left| I\right|/2}} \sum_{J\in qA^{QI}} \ind_{\alpha}(J)\cdot u_1^{\wt(J)}\right) \cdot u \\
\label{eq a2}  &= \sum_{n\geq 0} \sum_{I\in \J^{even}} \left( \frac{1}{\pi^{\left| I\right|/2}} \sum_{\substack{\text{$J\in qA^{QI}$}\\ \text{$\wt(J)\leq n(p+1)$}}} \tau_{n-\frac{\wt(J)}{p+1}} \cdot \ind_{\alpha}(J) \right) \cdot u_1^{n(p+1)}\cdot u ,
\end{align}
where \eqref{eq 340k9f} follows from the following observations:
\begin{itemize}
\item if $I\in \J^{even}$, then $QI$ must be congruent to $0$ modulo $p+1$,
\item so, if $J$ is furthermore a $QI$-tuple of $q$-alternating trees, each of which has weight congruent to $1$ modulo $p+1$, then the weight of $J$ itself is a multiple of $p+1$.
\end{itemize}
From the equality of \eqref{eq a1} and \eqref{eq a2}, we get the recursion
\begin{align}
 \label{recursion ab2340}
 \tau_n
  &= \left( \alpha \cdot \sum_{\substack{\text{$I\in \J^{even}$}\\ \text{$QI=n(p+1)$}}} \frac{1}{\pi^{\left| I\right|/2}}\right) - \sum_{I\in \J^{even}} \left( \frac{1}{\pi^{\left| I\right|/2}} \sum_{\substack{\text{$J\in qA^{QI}$}\\ \text{$1\leq \wt(J) \leq n(p+1)$}}} \ind_{\alpha}(J)\cdot \tau_{n-\frac{\wt(J)}{p+1}}\right).
\end{align}
Solving recursion \eqref{recursion ab2340} for $\tau_n$ using the same method using \eqref{elementary eq 1} as in the proof of Theorem \ref{main action thm on u}, setting $x_n$ and $y_n$ as follows,
\begin{align*}
 x_n &= \alpha \cdot \sum_{\substack{\text{$I\in \J^{even}$}\\ \text{$QI=n(p+1)$}}} \frac{1}{\pi^{\left| I\right|/2}},\mbox{\ \ \ and} \\
 y_n &= \sum_{\substack{\text{$I\in \J^{even}$}\\ \text{$J\in qA^{QI}$}\\ \text{$\wt(J)= n(p+1)$}}} \frac{\ind_{\alpha}(J)}{\pi^{\left| I\right|/2}}
\end{align*}
we arrive at \eqref{tau formula 1}.
\end{proof}

Using Corollary \ref{witt action on u thm} and the list of $q$-alternating trees of low weights given in the proof of Corollary \ref{witt comp cor 1}, one can easily write out the first few terms in $\alpha.u$ without any reference to trees:
\begin{corollary}\label{witt comp u cor}
Let $q = p$, and let $\alpha\in W(\mathbb{F}_{q^2})^\times\subseteq\Aut(\mathbb{G})$. Write $\beta$ as shorthand for the quotient $\overline{\alpha}/\alpha$. Then $\alpha$ acts on $u$ as follows:
\begin{align}
\nonumber \alpha . u 
  &\equiv 
   \alpha\cdot u
  +\frac{\alpha}{\pi}(1-\beta^{p+1}) u_1^{p+1} \cdot u
  +\frac{\alpha p}{\pi^2}\beta^{p+1}(1-\beta^{p+1}) u_1^{2p+2}\cdot u \mod (u_1^{3p+3},u_1^{p^2}).\end{align}
\end{corollary}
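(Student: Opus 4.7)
The plan is to specialize Corollary \ref{witt action on u thm} to $q=p$ and explicitly compute the coefficients $\tau_0,\tau_1,\tau_2$. Setting
\[ x_i = \sum_{\substack{I\in \J^{even}\\ QI=i(p+1)}} \pi^{-|I|/2}, \qquad y_k = \sum_{I\in \J^{even}} \pi^{-|I|/2} \sum_{\substack{J\in qA^{QI}\\ \wt(J)=k(p+1)}} \ind_{\alpha}(J), \]
formula \eqref{tau formula 1} reads $\tau_n = \alpha \sum_{i=0}^n x_i \sum_{K,\ \Sigma K = n-i} (-1)^{|K|}\prod_{k\in K} y_k$.

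First I would catalogue the relevant $I\in \J^{even}$. Since such an $I$ is strictly increasing, parity-alternating, and starts with an even index, any nonempty $I\in \J^{even}$ satisfies $QI\geq 1+p$, with equality exactly when $I=(0,1)$. The next-smallest candidates (length-two sequences such as $(0,3)$ or $(2,3)$, or length-four sequences starting with $(0,1,2,3)$) all yield $QI>2(p+1)$ for every prime $p$. Consequently $x_0=1$, $x_1=\pi^{-1}$, and $x_2=0$ (the latter because $2(p+1)$ has a repeated digit in its base-$p$ expansion and so is not a sum of distinct powers of $p$).

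Next I would evaluate $y_1$ and $y_2$, leveraging the low-weight enumeration of $p$-alternating trees already carried out in the proof of Corollary \ref{witt comp cor 1}. Only $I=(0,1)$ contributes to either $y_1$ or $y_2$, forcing $J$ to be a $(p+1)$-tuple of $q$-alternating trees, each of weight congruent to $1\pmod{p+1}$. For $y_1$, the only admissible $(p+1)$-tuple consists of $p+1$ copies of the unique weight-$1$ tree (whose $\alpha$-index is $\beta$), giving $y_1=\beta^{p+1}/\pi$. For $y_2$, the $(p+1)$-tuple must contain one weight-$(p+2)$ tree (in any of $p+1$ ordered positions) together with $p$ weight-$1$ trees; summing the two weight-$(p+2)$ indices (which total $\tfrac{1}{\pi}\beta(\beta^{p+1}-1)$) over all positions and multiplying by $\beta^p/\pi$ yields $y_2=(p+1)\beta^{p+1}(\beta^{p+1}-1)/\pi^2$.

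Finally I would substitute into the formula for $\tau_n$. The computations $\tau_0 = \alpha x_0 = \alpha$ and $\tau_1 = \alpha(x_1 - x_0 y_1) = (\alpha/\pi)(1-\beta^{p+1})$ are immediate, while
\[ \tau_2 = \alpha\bigl(x_0(y_1^2 - y_2) - x_1 y_1\bigr) = \tfrac{\alpha}{\pi^2}\bigl[\beta^{2(p+1)} - (p+1)\beta^{p+1}(\beta^{p+1}-1) - \beta^{p+1}\bigr] = \tfrac{\alpha p}{\pi^2}\beta^{p+1}(1-\beta^{p+1}), \]
matching the claim. The cutoff $u_1^{p^2}$ in the statement is needed only to ensure, uniformly in $p\geq 2$, that our enumeration of weight-$(p+2)$ trees is complete (for small $p$, the $u_1^{2p+2}$ term is already killed by this cutoff). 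The main obstacle is purely combinatorial bookkeeping: correctly identifying the contributing $I$'s and ordered tuples $J$, and in particular accounting for the $(p+1)$ ordered positions of the weight-$(p+2)$ tree inside the $(p+1)$-tuple without overcounting.
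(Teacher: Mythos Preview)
Your proposal is correct and follows exactly the approach the paper indicates: specialize Corollary~\ref{witt action on u thm} to $q=p$ and feed in the low-weight enumeration of $p$-alternating trees from the proof of Corollary~\ref{witt comp cor 1}. Your identification of the contributing $I\in\J^{even}$, your computation of $x_0,x_1,x_2$ and $y_1,y_2$, and the final algebra for $\tau_0,\tau_1,\tau_2$ are all accurate, including the observation that the cutoff $u_1^{p^2}$ handles the small-prime edge cases where the weight-$(p+2)$ tree count would otherwise be incomplete.
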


\appendix
\section{Examples of ordered rooted $q$-labelled trees}
\label{Drawn examples...}
In this appendix, we demonstrate the counting involved in Theorem \ref{main action thm} by drawing all ordered rooted $q$-labelled trees, for all $q$, in all weights $\leq 4$. We will follow the usual conventions in combinatorics by drawing the root of a rooted tree at the {\em top}. We mark each vertex $v$ by the label $(H_v,I_v)$ (see Definition \ref{def of q-labelling}), writing the sequence $H_v$ on top, and the sequence $I_v$ on the bottom. The notation $()$ denotes an empty sequence. We also mark each labelled ordered tree with its weight and $(\alpha_0,\alpha_1)$-index. For brevity we write ``index'' as shorthand for ``$(\alpha_0,\alpha_1)$-index.''

\paragraph{\bf Weights 1 and 2} For all prime powers $q$, there is only one ordered rooted $q$-labelled tree of weight $1$, and only one of weight $2$:
\[
\boxed{\parbox{50pt}{weight $1$\\index $\frac{\overline{\alpha}_0}{\alpha_0}$} \vcenter{\xymatrix{
*+[F]\txt{()\\(0)}}
}}\hspace{60pt}
\boxed{\parbox{50pt}{weight $2$\\index $\frac{-\overline{\alpha}_0\overline{\alpha}_1}{\alpha_0^2}$} \vcenter{\xymatrix{
*+[F]\txt{(0)\\(0)} \ar[d] \\
*+[F]\txt{()\\(0)}}
}}
\]
\paragraph{\bf Weight 3} There is one ordered rooted $q$-labelled tree of weight $3$ which exists for all $q$, and two more that exist only if $q=2$.
\[
\boxed{\parbox{50pt}{weight $3$\\index $\frac{\overline{\alpha}_0\overline{\alpha}_1^2}{\alpha_0^3}$} \vcenter{\xymatrix{
*+[F]\txt{(0)\\(0)} \ar[d] \\
*+[F]\txt{(0)\\(0)} \ar[d] \\
*+[F]\txt{()\\(0)}}
}}\hspace{60pt}
\boxed{\parbox{50pt}{\textcolor{red}{$q=2$ only}\\weight $3$\\index $\frac{\overline{\alpha}_0^3\alpha_1}{\alpha_0^4}$} \vcenter{\xymatrix{
 & *+[F]\txt{(0,1)\\()} \ar[d] \ar[ld] \ar[dr] & \\
*+[F]\txt{()\\(0)} & *+[F]\txt{()\\(0)} & *+[F]\txt{()\\(0)}
}
}}
\]
\[
\boxed{\parbox{50pt}{\textcolor{red}{$q=2$ only}\\weight $3$\\index $\frac{\alpha_1}{\alpha_0}$} \vcenter{\xymatrix{
*+[F]\txt{()\\(0,1)}}
}}
\]
\paragraph{\bf Weight 4} There is one ordered rooted $q$-labelled tree of weight $3$ which exists for all $q$, five that exist only if $q=2$, and two that exist only if $q=3$. 

The reader will immediately notice that three of the weight $4$ trees that exist for $q=2$ differ only in their ordering, i.e., they are they horizontal permutations of one another, and they all have the same index, namely $\frac{-\overline{\alpha}_0^3\alpha_1\overline{\alpha}_1}{\alpha_0^5}$. This demonstrates by example that the orderings matter: if we calculate $(\alpha_0 + \alpha_1S).u_1$ in the case $q=2$ using Theorem \ref{main action thm}, the coefficient of $u_1^4$ has {\em three} copies of $\frac{-\overline{\alpha}_0^3\alpha_1\overline{\alpha}_1}{\alpha_0^5}$, not just one, since the sum \eqref{sum 003941} in Theorem \ref{main action thm} is taken over {\em ordered} trees. 
\[
\boxed{\parbox{50pt}{weight $4$\\index $\frac{-\overline{\alpha}_0\overline{\alpha}_1^3}{\alpha_0^4}$} \vcenter{\xymatrix{
*+[F]\txt{(0)\\(0)} \ar[d] \\
*+[F]\txt{(0)\\(0)} \ar[d] \\
*+[F]\txt{(0)\\(0)} \ar[d] \\
*+[F]\txt{()\\(0)}}
}}\hspace{60pt}
\boxed{\parbox{50pt}{\textcolor{red}{$q=2$ only}\\weight $4$\\index $\frac{-\overline{\alpha}_0^3\alpha_1\overline{\alpha}_1}{\alpha_0^5}$} \vcenter{\xymatrix{
 & *+[F]\txt{(0,1)\\()} \ar[d] \ar[ld] \ar[dr] & \\
*+[F]\txt{()\\(0)} & *+[F]\txt{()\\(0)} & *+[F]\txt{(0)\\(0)}\ar[d] \\
 & & *+[F]\txt{()\\(0)}
}
}}
\]
\[
\boxed{\parbox{50pt}{\textcolor{red}{$q=2$ only}\\weight $4$\\index $\frac{-\overline{\alpha}_0^3\alpha_1\overline{\alpha}_1}{\alpha_0^5}$} \vcenter{\xymatrix{
 & *+[F]\txt{(0,1)\\()} \ar[d] \ar[ld] \ar[dr] & \\
*+[F]\txt{()\\(0)} & *+[F]\txt{(0)\\(0)} \ar[d] & *+[F]\txt{()\\(0)}\ \\
 & *+[F]\txt{()\\(0)} &
}
}}
\hspace{60pt}
\boxed{\parbox{50pt}{\textcolor{red}{$q=2$ only}\\weight $4$\\index $\frac{-\overline{\alpha}_0^3\alpha_1\overline{\alpha}_1}{\alpha_0^5}$} \vcenter{\xymatrix{
 & *+[F]\txt{(0,1)\\()} \ar[d] \ar[ld] \ar[dr] & \\
*+[F]\txt{(0)\\(0)} \ar[d] & *+[F]\txt{()\\(0)} & *+[F]\txt{()\\(0)} \\
 *+[F]\txt{()\\(0)} & &
}
}}
\]
\[
\boxed{\parbox{50pt}{\textcolor{red}{$q=2$ only}\\weight $4$\\index $\frac{\overline{\alpha}_0\alpha_1}{\alpha_0^2}$} \vcenter{\xymatrix{
*+[F]\txt{()\\(0,1)} \ar[d] \\
*+[F]\txt{()\\(0)}
}}}
\hspace{30pt}
\boxed{\parbox{50pt}{\textcolor{red}{$q=2$ only}\\weight $4$\\index $\frac{1}{\pi}\frac{\overline{\alpha}_0^4}{\alpha_0^4}$} \vcenter{\xymatrix{
 & *+[F]\txt{(0,1)\\(0)} \ar[d]\ar[ld]\ar[rd] & \\
 *+[F]\txt{()\\(0)} & *+[F]\txt{()\\(0)} & *+[F]\txt{()\\(0)}
}}}
\hspace{30pt}
\boxed{\parbox{50pt}{\textcolor{red}{$q=2$ only}\\weight $4$\\index $\frac{-1}{\pi}\frac{\overline{\alpha}_0}{\alpha_0}$} \vcenter{\xymatrix{
 *+[F]\txt{(0)\\(0,1)} \ar[d] \\
 *+[F]\txt{()\\(0)} 
}}}
\]
\[
\boxed{\parbox{50pt}{\textcolor{red}{$q=2$ only}\\weight $4$\\index $\frac{-\overline{\alpha}_0^3\alpha_1\overline{\alpha}_1}{\alpha_0^5}$} \vcenter{\xymatrix{
 & *+[F]\txt{(0)\\(0)} \ar[d] & \\
 & *+[F]\txt{(0,1)\\()} \ar[ld]\ar[d]\ar[rd] & \\
*+[F]\txt{()\\(0)} & *+[F]\txt{()\\(0)} & *+[F]\txt{()\\(0)}
}}}
\hspace{30pt}
\boxed{\parbox{50pt}{\textcolor{red}{$q=2$ only}\\weight $4$\\index $\frac{-\overline{\alpha}_0^4}{\alpha_0^4}$} \vcenter{\xymatrix{
 & *+[F]\txt{(2)\\()} \ar[d]\ar[ld]\ar[rd]\ar[rrd] & & \\
 *+[F]\txt{()\\(0)} & *+[F]\txt{()\\(0)} & *+[F]\txt{()\\(0)} & *+[F]\txt{()\\(0)}
}}}
\]
\[
\boxed{\parbox{50pt}{\textcolor{red}{$q=2$ only}\\weight $4$\\index $\frac{\overline{\alpha}_0}{\alpha_0}$} \vcenter{\xymatrix{
 *+[F]\txt{()\\(2)} 
}}}
\hspace{15pt}
\boxed{\parbox{50pt}{\textcolor{red}{$q=3$ only}\\weight $4$\\index $\frac{\alpha_1}{\alpha_0}$} \vcenter{\xymatrix{
 *+[F]\txt{()\\(0,1)} 
}}}
\hspace{15pt}
\boxed{\parbox{50pt}{\textcolor{red}{$q=3$ only}\\weight $4$\\index $\frac{\overline{\alpha}_0^4\alpha_1}{\alpha_0^5}$} \vcenter{\xymatrix{
 & *+[F]\txt{(0,1)\\()} \ar[ld]\ar[d]\ar[rd]\ar[rrd] & & \\
 *+[F]\txt{()\\(0)} & *+[F]\txt{()\\(0)} & *+[F]\txt{()\\(0)} & *+[F]\txt{()\\(0)} 
}}}
\]
One can of course go on to higher weights, but we stop here, as weight $4$ is enough to see the first case in which the orderings on the trees matter.

We return now to the example from \eqref{p=2 example}. In that case, $q=2$ and $\alpha_0+ \alpha_1S = 1 + \zeta_3 p$, i.e., $\alpha_0 =1 + \zeta_3 p$ and $\alpha_1 = 0$, where $\zeta_3$ is a primitive cube of unity in $W(\mathbb{F}_4)$. It is elementary to calculate that $\overline{\alpha}_0/\alpha_0$ is then equal to $-1$.
Looking over the $2$-labelled trees drawn above, we see that the only weight $1$ tree has index $\overline{\alpha}_0/\alpha_0$, while all the weight $2$ and weight $3$ trees have index divisible by $\alpha_1$. This, together with Theorem \ref{lettered thm A}, tells us that $(1+\zeta_3p).u_1\equiv -u_1\mod u_1^4$. Among the weight $4$ trees, there are four whose indices are not divisible by $\alpha_1$. Summing their indices, we get 
\begin{align*} \frac{1}{p}\frac{\overline{\alpha}_0^4}{\alpha_0^4} - \frac{1}{p}\frac{\overline{\alpha}_0}{\alpha_0} - \frac{\overline{\alpha}_0^4}{\alpha_0^4} + \frac{\overline{\alpha}_0}{\alpha_0}
 &= -1, \mbox{\ \ \ i.e.,}\\
(1+\zeta_3p).u_1\equiv -u_1 - u_1^4\mod u_1^5.
\end{align*}
This confirms the first terms in the power series \eqref{p=2 example} and shows how they arise from $2$-labelled trees.


\def\cprime{$'$} \def\cprime{$'$} \def\cprime{$'$} \def\cprime{$'$}

\end{document}